\newcommand{\R}{{\mathbb R}}
\numberwithin{equation}{section}
\newtheorem{theorem}{Theorem}[section]
\newtheorem{prop}[theorem]{Proposition}
\def\eqref#1{(\ref{#1})}
\theoremstyle{definition}
\theoremstyle{remark}
\newtheorem{remark}[theorem]{Remark}
\def\R{\mathbb R}
\def\C{\mathbb C}
\def\D{\mathbf D}
\begin{document}

\author
{V.~G. Maz'ya}
\address{Department of Mathematics, Link\"oping University, SE-581 83, Link\"oping,
Sweden and RUDN University, 
6 Miklukho-Maklay St, Moscow, 117198, Russia
}
\email{vlmaz@mai.liu.se}

\author{I.~E. Verbitsky}\address{Department of Mathematics, University of Missouri, Columbia, Missouri 65211, USA}
\email{verbitskyi@missouri.edu}
\title[Accretivity of the second order differential operator]{Accretivity of the general second order linear differential operator}

\subjclass[2010]{Primary 35J15,  42B37; Secondary 31B15, 35J10}
\keywords{Accretive differential operators,  complex-valued coefficients, form boundedness, 
Schr\"{o}dinger operator}

\begin{abstract}

For the  general 
 second order linear differential operator 
$$\mathcal L_0 = \sum_{j, \, k=1}^n \, a_{jk} \, \partial_j \partial_k +  
\sum_{j=1}^n \, b_{j} \,  \partial_j + c$$
with complex-valued 
distributional coefficients $a_{jk}$, $b_{j}$, and $c$  in an open set $\Omega \subseteq \R^n$ ($n \ge 1$), we present conditions    
which ensure that $-\mathcal L_0$ is accretive, i.e., 
${\rm Re} \, \langle -\mathcal L_0 \phi, 
\phi\rangle \ge 0$ for all $\phi \in C^\infty_0(\Omega).
$ 
\end{abstract}

\dedicatory{Dedicated to Carlos Kenig with admiration and deep respect}

\maketitle
\tableofcontents

\section{Introduction}\label{Introduction}

Let $\mathcal{L}_0$ be the general second order 
differential operator in an open set $\Omega \subseteq \R^n$,  
\begin{equation}\label{E:0.0}
\mathcal{L}_0= \sum_{j, \, k=1}^n \, a_{jk} \, \partial_j \partial_k +  
\sum_{j=1}^n \, b_{j} \,  \partial_j + c,  
\end{equation}
where $a_{jk}$,  $b_{j}$, and $c$ are 
complex-valued distributions in $D'(\Omega)$. In this paper, we are concerned with the \textit{accretivity} of  $-\mathcal{L}_0$ defined in terms  of the real part of its quadratic form: 
 \begin{equation}\label{accr'}
{\rm Re} \,  \langle -\mathcal{L}_0 \, u, \, u \rangle \ge 0,  
\end{equation}
for all complex-valued functions  $u \in C^\infty_0(\Omega)$.  In other words, we study the dissipativity property associated with  $\mathcal{L}_0$.

If  the principal part $\mathcal{A} u$ of the differential operator is given 
in the divergence form, 
\begin{equation}\label{E:div-form}
\mathcal{A} u =  {\rm div} \, (A \nabla u), \quad u \in C^\infty_0(\Omega),
\end{equation}
then we consider the operator 
\begin{equation}\label{E:0.b}
\mathcal{L} u=  {\rm div} \, (A \nabla u) +  \mathbf{b} \cdot\nabla u + c \, u, 
\end{equation}
with distributional coefficients $A=(a_{jk})$, $\mathbf{b} =(b_j)$, and $c$. 
The corresponding sesquilinear form $\langle \mathcal{L} u, v\rangle$ is given by 
\begin{equation}\label{E:div-sesq}
\langle \mathcal{L} u, v\rangle = - \langle A \nabla u, \nabla v \rangle 
+ \langle \mathbf{b} \cdot\nabla u, v\rangle + \langle c \,  u, v\rangle. 
\end{equation}

We observe that $\mathcal{L}_0 = \mathcal{L} - {\rm Div} \, {A} \cdot  \nabla$ 
(see, for instance, 
\cite{KP}, \cite{MV3}), 
where ${\rm Div}\!\!:  D'(\Omega)^{n\times n} \to D'(\Omega)^{n}$ is the row divergence operator defined in Sec. \ref{Section 2}. 
 Hence, we 
 can always express $\langle \mathcal{L}_0 u, v\rangle$ in the form 
\eqref{E:div-sesq}, with  $ \mathbf{b}- {\rm Div} \, {A}$ 
in place of $ \mathbf{b}$, for 
distributional coefficients $A$ and  $\mathbf{b}$. 

If the differential operator is given in a more general divergence form, 
\begin{equation}\label{E:0.b*}
\mathcal{L}_1 u=  {\rm div} \, (A \nabla u) +  \mathbf{b_1} \cdot\nabla u + {\rm div}\, 
(  \mathbf{b_2} \, u) + c_1 \, u, 
\end{equation}
then obviously  it is reduced to \eqref{E:0.b} with $\mathbf{b} =\mathbf{b_1} + 
\mathbf{b_1}$ and $c=c_1 + {\rm div}\,  \mathbf{b_2}$.

From now on, without loss of generality we will treat the accretivity property   
\begin{equation}\label{accr}
{\rm Re} \,  \langle -\mathcal{L} \, u, \, u \rangle \ge 0,  \quad \textrm{for all} \, \, u \in C^\infty_0(\Omega),
\end{equation}
 associated 
with  the divergence form operator \eqref{E:0.b}.

Assuming that 
$A=(a_{jk})$, 
$\mathbf{b}=(b_j)$ and $c $ are locally integrable in $\Omega$,   
 we write the sesquilinear form of $\mathcal{L}$ as 
  \begin{align}\label{sesq-l}
\langle \mathcal{L} u, v\rangle & =
 \int_{\Omega}  ( -(A \, \nabla u) \cdot \nabla \overline{v}  + \mathbf{b} \cdot\nabla u \, \, 
\overline{v} + c \, u \,  \overline{v} ) \, dx,      
\end{align}
where $u, \, v  \in C^\infty_0(\Omega)$.   Sometimes it will be convenient to write  
(\ref{E:div-sesq}) in this form even for distributional coefficients $A$, $\mathbf{b}$, and $c$.

Our main results on the accretivity problem are stated in Sec. \ref{main} below, in particular,  
Proposition \ref{prop_1} and Theorem \ref{thm-i2} in higher dimensions $n \ge 2$, 
and Theorem \ref{Theorem 0} in the one-dimensional case.

This problem is of substantial interest in the real-variable case as well, where 
the goal is to characterize  operators $-\mathcal{L}$ with real-valued coefficients whose quadratic form is nonnegative definite, 
\begin{equation}\label{pos-def}
\langle -\mathcal{L} h, h\rangle \ge 0, \quad \textrm{for all real-valued} \, \, h \in C^\infty_0(\Omega).
 \end{equation} 
 Such operators $-\mathcal{L}$ are called nonnegative definite. 

 In the special case of Schr\"{o}dinger operators  
 \begin{equation}\label{schro}
 \mathcal{H} u = {\rm div} \, (P \nabla u) + \sigma \, u,
  \end{equation}
 with real-valued  $P\in D'(\Omega)^{n \times n}$ and $\sigma \in D'(\Omega)$, a characterization of this property 
 was obtained earlier in \cite[Proposition 5.1]{JMV1} under the assumption that $P$ is uniformly elliptic, i.e., 
 \begin{equation}\label{ell}
m \, ||\xi||^2 \le  P(x) \xi \cdot \xi \le M \, ||\xi||^2, \quad \textrm{for all} \, \, \xi \in \R^n, 
\, \,  \textrm{a.e.} \, \, x \in \Omega, 
 \end{equation}
 with the ellipticity  constants $m>0$ and $M<\infty$. 
  
An analogous characterization of   \eqref{pos-def} 
 for more general operators which include drift terms, $\mathcal L 
= {\rm div}  ( P \nabla \cdot) +
\mathbf{b} \cdot \nabla + c$, with real-valued coefficients and $P$ satisfying \eqref{ell},  
is given in Theorem \ref{thm-real}  below.

Returning to the accretivity problem 
\eqref{accr} for $\mathcal L 
= {\rm div}  ( A \nabla \cdot) +
\mathbf{b} \cdot \nabla + c$
in the complex-valued case,  define the symmetric component $A^s$ and  its skew-symmetric counterpart $A^c$  
respectively by 
 \begin{equation}\label{trans}
A^s =\frac{1}{2}(A + A^{\perp}), \quad A^c =\frac{1}{2}(A - A^{\perp}),
 \end{equation}
where $A=(a_{jk})\in D'(\Omega)^{n \times n}$, and $A^{\perp}=(a_{kj})$ is the transposed matrix. 

As we will see below, in order that $\mathcal{L}$ be accretive, the matrix 
 $A^s$  must have a nonnegative definite real part: 
$P = {\rm Re} \, A^s$  should satisfy 
\begin{equation}\label{matr-def}
P \xi\cdot \xi \ge 0 \quad \textrm{for all} \, \, \xi \in R^n, \quad \textrm{in} \, \, D'(\Omega). 
\end{equation}
Moreover, the corresponding Schr\"{o}dinger operator $\mathcal{H}$ defined by 
\eqref{schro} 
with 
\[
P= {\rm Re} \, A^s, \quad \sigma ={\rm Re} \, c - \frac {1}{2}{\rm div} \, ({\rm Re} \, \mathbf{b}), 
\]
must be 
nonnegative definite: 
\begin{equation}\label{norm}
[h]_{\mathcal{H}}^2 =\langle - \mathcal{H} h, h\rangle= 
\langle P \nabla h, \nabla h\rangle - \langle \sigma h, h\rangle \ge 0, 
\end{equation}
for all real-valued (or complex-valued) $h \in C^\infty_0(\R^n)$.

The rest of the accretivity problem for $\mathcal L$ (see Sec. \ref{sec2.1}) 
boils down to the commutator 
inequality involving these quadratic forms,  
\begin{equation}\label{comm-i}
\left \vert \langle \tilde{\mathbf{b}},  u \nabla v -v \nabla u\rangle \right\vert 
\le  [u]_{\mathcal{H}} \, [v]_{\mathcal{H}},  
\end{equation}
for all real-valued  $u, v \in C^\infty_0(\R^n)$, where the real-valued vector field 
$\tilde{\mathbf{b}}$ is given by 
\[
\tilde{\mathbf{b}} =  \frac{1}{2} \,  \left [ {\rm Im} \,\mathbf{b} - {\rm Div} ({\rm Im} \, A^c) \right ].
\] 

Under some mild restrictions on $\mathcal{H}$, the ``norms''  $[u]_{\mathcal{H}}$  and $[v]_{\mathcal{H}}$  on the right-hand side of  \eqref{comm-i}
 can be replaced,  up to a constant multiple, with the corresponding Dirichlet norms $|| \nabla \cdot||_{L^{2} (\Omega)}$. This leads to explicit criteria of accretivity in terms of  
 ${\rm BMO}^{-1}$ estimates, 
 as  
 in Theorem \ref{thm-i2} below.

Similar commutator inequalities related to compensated compactness theory \cite{CLMS}  
 were studied earlier \cite{MV3}
 in the context of the 
\textit{form boundedness} problem   
 for  $\mathcal{L}$,  
\begin{equation}\label{E:1.2old}
| \langle \mathcal{L} \, u, \, v \rangle | \leq \, C \, || \nabla u||_{L^{2} (\Omega)} \, 
|| \nabla v||_{L^{2} (\Omega)},    \quad u, \, v  \in C^\infty_0(\Omega), 
\end{equation}
where the constant $C$ does not depend on $u, \, v $.

If \eqref{E:1.2old} holds, 
then 
$\langle \mathcal{L} \, u, \, v \rangle$ can be extended by continuity to 
$u, v \in L^{1, \, 2} (\Omega)$. Here $L^{1, \, 2}(\Omega)$   is 
the completion of (complex-valued) $C^\infty_0(\Omega)$ functions with respect to the  norm 
$|| u||_{L^{1, \, 2}(\Omega)} = ||\nabla  u||_{L^2(\Omega)}$. 
 Equivalently,  
\begin{equation}\label{E:dual-op}
\mathcal L : \, \, L^{1, \, 2}(\Omega)\to L^{-1, \, 2}(\Omega)
\end{equation}
 is a bounded operator, where  
$L^{-1, \, 2}(\Omega)=L^{1, \, 2}(\Omega)^*$ is a dual 
Sobolev space. Analogous problems have been studied  for the inhomogeneous Sobolev space 
 $W^{1, \, 2}(\Omega)= L^{1, \, 2}(\Omega)\cap L^2(\Omega)$, fractional Sobolev spaces, 
 infinitesimal 
 form boundedness, and other related questions (\cite{MV2}--\cite{MV5}).

The form boundedness problem \eqref{E:1.2old} for the 
general second order differential operator $\mathcal{L}$ in the case $\Omega =\R^n$ 
was  characterized completely by the authors in \cite{MV3} using harmonic 
analysis  and potential theory methods. We observe that no ellipticity assumptions 
were imposed in \cite{MV3} 
on the principal part $\mathcal{A}$ of $\mathcal{L}$. 

For the Schr\"{o}dinger operator 
$\mathcal{H}=\Delta + \sigma$ with $\sigma \in D'(\Omega)$, where 
either $\Omega =\R^n$, or $\Omega$ is a bounded domain that supports 
Hardy's inequality (see \cite{An}), a characterization  of form boundedness was obtained earlier in \cite{MV1}. 
A different approach for $\mathcal{H}=  {\rm div} \, (P \nabla \cdot) + \sigma$ in 
general open sets $\Omega\subseteq \R^n$ based on PDE 
and real analysis methods,  under the uniform ellipticity assumptions on $P$, was developed 
in \cite{JMV1}. A quasilinear version for operators of the $p$-Laplace type can be found in \cite{JMV2}.

Both the accretivity and form boundedness problems have numerous applications, including  mathematical quantum mechanics (\cite{RS}, \cite{RSS}),  elliptic and parabolic PDE with singular coefficients (\cite{EE}, \cite{GGM}, \cite{K}, 
\cite{KP}, \cite{M}, \cite{NU}, \cite{H}, \cite{Ph}), fluid mechanics and Navier-Stokes equations (\cite{GP}, \cite{KT}, \cite{SSSZ}, \cite{T}), semigroups  
and Markov processes  
(\cite{LPS}), homogenization theory (\cite{ZP}), harmonic analysis 
(\cite{CLMS}, \cite{FNV}), etc. 

We remark that, for the form boundedness, the assumption that the coefficients are complex-valued is not essential. It is easily reduced to the real-valued case. 

The situation 
is quite different for the accretivity problem, where the presence of complex-valued coefficients  
leads to additional complications, especially in higher dimensions ($n \ge 2$) when the matrix ${\rm Im} \, A$ is not symmetric, and/or the imaginary part of $\mathbf{b}$ is nontrivial. 
Then commutator inequalities of the type \eqref{comm-i} with sharp constants, ${\rm BMO}$ estimates, 
and other tools of harmonic analysis come into play.  

These phenomena, along with some examples demonstrating possible interaction between 
the principal part, drift term and zero-order term of the operator $\mathcal{L}$, are discussed in the next section.

\section{Main results}\label{main}

\subsection{General accretivity criterion}\label{sec2.1} 

Let $\Omega \subseteq \R^n$ ($n \ge 1$) be an open set,  and let $\mathcal{L}$ be a divergence 
form second order linear differential operator with complex-valued distributional 
coefficients defined by \eqref{E:0.b}. 

For $A=(a_{jk})\in D'(\Omega)^{n \times n}$, define its symmetric part $A^s$ and  skew-symmetric part $A^c$  respectively by \eqref{trans}. 
The accretivity property for $-\mathcal{L}$ can be characterized  in terms of the following real-valued expressions: 
\begin{equation}\label{expr}
P={\rm Re} \, A^s, \quad  \tilde{\mathbf{b}}=  \frac{1}{2} \,  \left [ {\rm Im} \, \mathbf{b} - {\rm Div} \,  ( {\rm Im} \, A^c) \right ], \quad \sigma={\rm Re} \, c - \frac {1}{2}{\rm div} \, ({\rm Re} \, \mathbf{b}),
\end{equation}
where $P=(p_{jk})\in D'(\Omega)^{n\times n}$, $\tilde{\mathbf{b}}=(\tilde{b_j})\in D'(\Omega)^{n}$, 
and $\sigma\in D'(\Omega)$. This is a consequence of the relation (see Sec. \ref{proof-1})
\begin{equation}\label{L-2}
{\rm Re}  \langle -\mathcal{L} u, u\rangle 
={\rm Re}  \langle -\mathcal{L}_2 u, u\rangle, \quad 
u\in C^\infty_0 (\Omega),
\end{equation}
where 
 \begin{equation}\label{L-2a}
  \mathcal{L}_2  = {\rm div} \, (P \nabla \cdot) 
  + 2 i \, \tilde{\mathbf{b}} \cdot \nabla + \sigma. 
 \end{equation}

Moreover, in order that $-\mathcal{L}$ be accretive,  the matrix $P$ must be nonnegative definite, i.e., $P \xi\cdot \xi \ge 0$ in 
$D'(\Omega)$ 
for all $\xi \in \R^n$. In particular, each  $p_{jj}$ ($j=1, \ldots, n$) is  a nonnegative Radon measure. 
  
  A comprehensive characterization 
  of accretive operators $-\mathcal{L}$ is given in the following proposition. 
  
\begin{prop}\label{prop_1}  Let $\mathcal L 
= {\rm div}  ( A \nabla \cdot) +
\mathbf{b} \cdot \nabla + c$, where  $A \in D'(\Omega)^{n\times n}$, 
 $\mathbf{b} \in D'(\Omega)^n$ and $c \in D'(\Omega)$ are complex-valued.  
 Suppose that $P$, $\tilde{\mathbf{b}}$, and $\sigma$ are defined by \eqref{expr}. 
 
 The operator $-\mathcal L$ is accretive if and only  if  $P$ is a nonnegative definite matrix, and 
the following two conditions hold: 
\begin{equation}\label{E:1.b}
[h]_{\mathcal{H}}^2=  \langle P \nabla h, \nabla h\rangle  - \langle \sigma \,  h, h\rangle\ge 0,  
\end{equation}
for all real-valued  $h \in C^\infty_0(\Omega)$,  and 
\begin{equation}\label{E:1.c}
 \left \vert \langle \tilde{\mathbf{b}},  u \nabla v -v \nabla u\rangle \right\vert 
\le [u]_{\mathcal{H}} \, [v]_{\mathcal{H}},  
\end{equation}
for all real-valued  $u, v \in C^\infty_0(\Omega)$.  
\end{prop}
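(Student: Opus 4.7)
The plan is to use the identity \eqref{L-2} to reduce the problem from $\mathcal{L}$ to the ``essentially real-coefficient'' operator $\mathcal{L}_2 = \mathrm{div}(P\nabla\cdot) + 2i\tilde{\mathbf{b}}\cdot\nabla + \sigma$, and then split a generic complex test function $u\in C_0^\infty(\Omega)$ as $u = f + ig$ with real $f, g$. Expanding $\mathrm{Re}\,\langle -\mathcal{L}_2 u, u\rangle$, the principal cross terms $P\nabla f\cdot\nabla g - P\nabla g\cdot\nabla f$ vanish by the symmetry of $P$ (since $P$ is real and symmetric and $f, g$ are real), the zero-order term splits cleanly as $\sigma(f^2+g^2)$, and the drift $-2i\tilde{\mathbf{b}}\cdot\nabla u\,\bar u$ contributes only through its antisymmetric mixed part. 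The expected outcome is the key identity
\[
\mathrm{Re}\,\langle -\mathcal{L}u, u\rangle \;=\; [f]_\mathcal{H}^2 + [g]_\mathcal{H}^2 + 2\langle\tilde{\mathbf{b}},\, f\nabla g - g\nabla f\rangle, \qquad f, g \in C_0^\infty(\Omega,\R).
\]
Sufficiency is then immediate: assuming $P\ge 0$, \eqref{E:1.b}, and \eqref{E:1.c}, the brackets $[f]_\mathcal{H},\,[g]_\mathcal{H}$ are bona fide nonnegative real numbers, and the key identity is bounded below by $([f]_\mathcal{H}-[g]_\mathcal{H})^2\ge 0$ via \eqref{E:1.c} and AM--GM.

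For the converse, setting $u = f$ real in the key identity recovers \eqref{E:1.b} for free. To extract \eqref{E:1.c}, the plan is to test at $u = tf + isg$ with $t, s > 0$; accretivity becomes
\[
t^2[f]_\mathcal{H}^2 + s^2[g]_\mathcal{H}^2 + 2ts\,\langle\tilde{\mathbf{b}},\, f\nabla g - g\nabla f\rangle \;\ge\; 0.
\]
Since the sign of the cross term flips under $g\mapsto -g$, the inequality upgrades to absolute value on the right-hand side, and AM--GM optimization over $t, s > 0$ subject to $ts = 1$ delivers \eqref{E:1.c}. The degenerate case in which one of the brackets vanishes is handled by sending the complementary parameter to $\infty$ and forcing the cross term to be $0$.

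The main obstacle I expect is deducing that $P$ is nonnegative definite in the distributional sense from \eqref{E:1.b} alone, since $P$ and $\sigma$ are only distributions and cannot be evaluated pointwise. The plan is to test \eqref{E:1.b} against highly oscillatory real functions
\[
h_\lambda(x) \;=\; \lambda^{-1}\,\phi(x)\,\sin(\lambda\, x\cdot\xi), \qquad \phi\in C_0^\infty(\Omega,\R),\ \xi\in\R^n,
\]
and to pass to the limit $\lambda\to\infty$: then $|\nabla h_\lambda|^2 \sim |\xi|^2\phi^2\cos^2(\lambda\, x\cdot\xi)$ concentrates weakly onto $\tfrac{1}{2}|\xi|^2\phi^2$, while $h_\lambda^2 = O(\lambda^{-2})$ kills the $\sigma$-contribution after a preliminary mollification of the coefficients. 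The surviving leading term is $\tfrac{1}{2}\langle P\xi\cdot\xi,\phi^2\rangle \ge 0$, which upon undoing the mollifier gives $P\xi\cdot\xi\ge 0$ in $D'(\Omega)$ for every $\xi\in\R^n$. Everything else in the proposition reduces to the key identity and the AM--GM optimization.
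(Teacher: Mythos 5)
Your proposal is correct and follows essentially the same route as the paper: both derive the key identity
\[
\mathrm{Re}\,\langle -\mathcal{L}u, u\rangle = [f]_\mathcal{H}^2 + [g]_\mathcal{H}^2 + 2\langle\tilde{\mathbf{b}},\, f\nabla g - g\nabla f\rangle
\]
for $u=f+ig$ (the paper writes $\langle {\rm Im}\,\mathbf{b}_1,\cdot\rangle$ with ${\rm Im}\,\mathbf{b}_1 = 2\tilde{\mathbf{b}}$), then obtain \eqref{E:1.c} by the same $t,s$-homogeneity optimization (the paper scales $f\mapsto\alpha f$, $g\mapsto\alpha^{-1}g$), and both treat the degenerate vanishing-bracket case by sending a parameter to $\infty$. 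The only genuine divergence is the proof that $P\ge 0$: the paper takes $u=e^{itx\cdot\xi}v$ directly in the accretivity inequality, divides by $t^2$, sends $t\to\infty$, and then converts the resulting bound on squares $v^2$ to general nonnegative test functions via the substitution $v=\eta(h+\delta)^{1/2}$, whereas you test \eqref{E:1.b} with $h_\lambda=\lambda^{-1}\phi\sin(\lambda x\cdot\xi)$. Both are frequency-blowup arguments; yours works on fewer terms (only $P$ and $\sigma$ appear), but it implicitly relies on two points you treat casually: (a) $\langle\sigma, h_\lambda^2\rangle\to 0$ fails for general $\sigma\in D'$ since $\|h_\lambda^2\|_{C^k}$ does not decay for $k\ge 2$ --- so the mollification step you mention in passing is mandatory, and one must verify (as the paper does in Section 4) that \eqref{E:1.b} survives mollification and can be recovered in the limit; and (b) the conclusion $\langle P\xi\cdot\xi,\phi^2\rangle\ge0$ only tests against perfect squares, so some version of the paper's $\eta(h+\delta)^{1/2}$ device is still needed to pass to arbitrary nonnegative $\psi\in C_0^\infty(\Omega)$ and conclude $P\xi\cdot\xi\ge0$ in $D'(\Omega)$. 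These are fillable, standard steps, but they are the two points where the brevity hides real content.
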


In equation \eqref{E:1.b}, the expression   $[h]_{\mathcal{H}}^2=\langle - \mathcal{H} h, h\rangle$ stands for  the quadratic form associated with the 
Schr\"{o}dinger operator  
$ \mathcal{H}={\rm div} \, (P \nabla h) + \sigma$,  
discussed in Sec. \ref{sec2.3}. 

In Theorem \ref{thm-grad} below, we show that it is possible to replace 
$\tilde{\mathbf{b}}$ in Proposition \ref{prop_1} by $\tilde{\mathbf{b}}-P \nabla \lambda$, with an appropriate change 
in $\sigma$. In particular, the commutator condition \eqref{E:1.c} trivializes if $\tilde{\mathbf{b}}=P \nabla \lambda$. This reduction is used in Sec. \ref{one-dim} in the one-dimensional case.

\subsection{Real-valued coefficients}\label{sec2.2} 

As a consequence of Proposition \ref{prop_1}, we see that, for operators with real-valued coefficients, 
the sole condition \eqref{E:1.b} characterizes nonnegative definite 
operators $-\mathcal{L}$ in an open set $\Omega\subseteq \R^n$ ($n\ge 1$). 
   We next state a more explicit characterization of  this property,  
  under 
 the assumption that  $P=A^s\in L^1_{{\rm loc}} (\Omega)^{n \times n}$  in the sufficiency part, and 
 that $P$ is uniformly elliptic in the necessity part. 
 
\begin{theorem}\label{thm-real}
Let $\mathcal{L} = {\rm div}  ( A \nabla \cdot) +
\mathbf{b} \cdot \nabla + c$, where  $A \in D'(\Omega)^{n\times n}$, 
 $\mathbf{b} \in D'(\Omega)^n$ and $c \in D'(\Omega)$ are real-valued. 
 Suppose that $P=A^s\in L^1_{{\rm loc}} (\Omega)^{n \times n}$ is a nonnegative definite matrix a.e. 
 
 {\rm (i)} If  there exists a measurable vector field $\mathbf{g}$ in $\Omega$ such that  $(P \mathbf{g}) \cdot \mathbf{g} \in L^1_{{\rm loc}}(\Omega)$, and 
 \begin{equation}\label{g-cond}
   \sigma=c - \frac {1}{2}{\rm div} \, (\mathbf{b}) \le {\rm div} \, (P \mathbf{g}) 
   - (P \mathbf{g}) \cdot \mathbf{g} \quad \textrm{in} \, \, D'(\Omega), 
      \end{equation}
then the operator 
$-\mathcal{L}$ is nonnegative definite. 

 {\rm (ii)} Conversely, if $-\mathcal{L}$ is nonnegative definite,  then there exists a vector field $\mathbf{g} \in L^2_{{\rm loc}} (\Omega)^n$ so that 
 $(P \mathbf{g}) \cdot \mathbf{g} \in L^1_{{\rm loc}}(\Omega)$, and 
 \eqref{g-cond} holds, 
    provided $P$ is uniformly elliptic.  
\end{theorem}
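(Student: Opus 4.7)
The plan is to apply Proposition~\ref{prop_1} in the real-valued setting. With all coefficients real one has ${\rm Im}\,\mathbf{b}=0$ and ${\rm Im}\,A^c=0$, hence $\tilde{\mathbf{b}} = 0$, so the commutator condition \eqref{E:1.c} is automatic and nonnegative definiteness of $-\mathcal{L}$ reduces to the Schr\"odinger form inequality \eqref{E:1.b},
\[
\int_\Omega (P\nabla h)\cdot\nabla h\, dx \ge \langle \sigma, h^2\rangle, \qquad h \in C_0^\infty(\Omega) \textrm{ real}.
\]
The task thus becomes characterizing this inequality through the distributional Riccati-type bound \eqref{g-cond}; this is essentially an Allegretto--Piepenbrink / ground-state representation adapted to a distributional potential $\sigma$.

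For part (i), I would first note that $(P\mathbf{g})\cdot\mathbf{g} \in L^1_{\rm loc}$ together with $P \in L^1_{\rm loc}$ force $P\mathbf{g}\in L^1_{\rm loc}$ via the pointwise Cauchy--Schwarz bound $|P\mathbf{g}|^2 \le {\rm tr}(P)\,(P\mathbf{g})\cdot\mathbf{g}$ for nonnegative definite $P$, so both sides of \eqref{g-cond} make sense as distributions. Pairing \eqref{g-cond} with the nonnegative test function $h^2$ and integrating by parts gives
\[
\langle \sigma, h^2\rangle \le -2\int_\Omega h\,(P\mathbf{g})\cdot\nabla h\, dx - \int_\Omega (P\mathbf{g})\cdot\mathbf{g}\, h^2\,dx.
\]
The Cauchy--Schwarz inequality for the form $P$, followed by AM--GM, yields the pointwise bound
\[
2\,|h\,(P\mathbf{g})\cdot\nabla h| \le (P\mathbf{g})\cdot\mathbf{g}\, h^2 + (P\nabla h)\cdot\nabla h,
\]
which when inserted above absorbs the $(P\mathbf{g})\cdot\mathbf{g}\, h^2$ term and leaves exactly \eqref{E:1.b}.

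For part (ii), the strategy is to realize $\mathbf{g}$ as the logarithmic gradient of a positive weak (super)solution $u>0$ of the Schr\"odinger equation $-{\rm div}(P\nabla u) = \sigma u$ in $D'(\Omega)$. Once such $u$ is produced, one sets $\mathbf{g} := -\nabla u/u$; uniform ellipticity of $P$, combined with De~Giorgi--Nash--Moser regularity and the Harnack inequality, keeps $u$ locally bounded both above and away from $0$, so $\mathbf{g} \in L^2_{\rm loc}(\Omega)^n$ and $(P\mathbf{g})\cdot\mathbf{g} = (P\nabla u)\cdot\nabla u/u^2 \in L^1_{\rm loc}(\Omega)$. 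A direct product-rule computation then gives
\[
{\rm div}(P\mathbf{g}) - (P\mathbf{g})\cdot\mathbf{g} = -\frac{{\rm div}(P\nabla u)}{u} = \sigma,
\]
so \eqref{g-cond} in fact holds with equality.

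The hard part will be producing the positive solution $u$, since classical Allegretto--Piepenbrink positivity principles presuppose locally bounded potentials while here $\sigma$ is only controlled through the form inequality \eqref{E:1.b}. My approach is to exhaust $\Omega$ by smooth subdomains $\Omega_k \Subset \Omega$, mollify $\sigma$ to smooth $\sigma_\epsilon$ still obeying a uniform form lower bound, solve the principal Dirichlet eigenvalue problem for $-{\rm div}(P\nabla\cdot)-\sigma_\epsilon$ on each $\Omega_k$ (whose principal eigenvalue is $\ge 0$ by \eqref{E:1.b}), normalize the positive eigenfunctions at a fixed interior point, and extract a locally uniform limit $u>0$ using Harnack. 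Uniform ellipticity of $P$ is essential both for the linear theory on each $\Omega_k$ and for passing the equation to the limit; without it one cannot guarantee that $\nabla u/u \in L^2_{\rm loc}$ at the end.
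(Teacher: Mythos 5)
Your part (i) is correct and follows the paper's argument essentially verbatim: pair the distributional Riccati inequality against $h^2$, integrate by parts, and close the estimate with the Cauchy--Schwarz inequality for the form $P$ followed by AM--GM. The only (immaterial) difference is that you do Cauchy--Schwarz/AM--GM pointwise while the paper does it at the level of integrals. Good.

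Part (ii) is where the proposal goes off the rails. Your plan is to manufacture a positive (super)solution $u$ of $-{\rm div}(P\nabla u)=\sigma u$ and set $\mathbf{g}=-\nabla u/u$ --- an Allegretto--Piepenbrink / ground-state representation argument --- relying on exhaustion, mollification of $\sigma$, principal Dirichlet eigenfunctions, De~Giorgi--Nash--Moser regularity, and the Harnack inequality. The paper explicitly warns, in Sec.~\ref{sec2.3}, that in this setting ``the fact that $-\mathcal{H}$ is nonnegative definite is \emph{not} equivalent to the existence of a positive solution $u$ to the Schr\"odinger equation $\mathcal{H}u=0$. In other words, in our setup, the Allegretto--Piepenbrink theorem is generally not true.'' Since $\sigma$ is only a distribution (not a measure, not in any Kato-type class), the local boundedness, Harnack, and regularity machinery you invoke is simply unavailable: there are no uniform-in-$\epsilon$ Harnack constants with which to pass to the limit and retain positivity, and it is not even clear that the mollified potentials $\sigma_\epsilon$ inherit the form lower bound, since mollification does not commute with taking squares of test functions when $P$ has variable coefficients. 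The vector field $\mathbf{g}$ the theorem asserts is \emph{not} generally the logarithmic gradient of a positive solution; the paper obtains part (ii) by citing \cite[Proposition~5.1]{JMV1}, which produces $\mathbf{g}\in L^2_{\rm loc}$ satisfying only the Riccati \emph{inequality} by a different (approximation/weak-compactness type) construction that does not require passing through a genuine positive solution. So the route you sketch for (ii) cannot be repaired along the lines proposed; it needs to be replaced by the cited Schr\"odinger-operator characterization.
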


The uniform ellipticity condition on $P$ in statement (ii) of Theorem \ref{thm-real} can be relaxed. We intend to address this question elsewhere. 

Conditions similar to \eqref{g-cond}  are well known
 in ordinary differential equations, in relation  to disconjugate Sturm-Liouville equations and Riccati equations with continuous coefficients  
 (\cite[Sec. XI.7]{Ha}, Corollary 6.1, Theorems 6.2 and 7.2). See also 
 \cite{H}, \cite{MV2}, as well as  
 the discussion in Sec. \ref{sec2.4} and 
Sec. \ref{one-dim} below in the one-dimensional case.

\subsection{Schr\"{o}dinger operators}\label{sec2.3} 

As was mentioned above, in the special case  of Schr\"{o}dinger operators 
$ \mathcal{H}={\rm div} \, (P \nabla h) + \sigma$,  with real-valued  $\sigma \in D'(\Omega)$ and uniformly elliptic 
$P$, Theorem \ref{thm-real} was obtained originally in 
\cite[Proposition 5.1]{JMV1}. Under these assumptions, $-\mathcal{H}$ is nonnegative definite, i.e., 
\[
[h]_{\mathcal{H}}^2=\langle -\mathcal{H} h, h \rangle\ge 0, \quad \textrm{for all} \, \, h \in C^\infty_0(\Omega), 
\]
 if and only if there exists a vector field $\mathbf{g} \in 
        L^2_{{\rm loc}} (\Omega)^n$ such that 
    \begin{equation}\label{g-cond1}
   \sigma \le {\rm div} \, (P \mathbf{g}) 
   - (P \mathbf{g}) \cdot \mathbf{g} \quad \textrm{in} \, \, D'(\Omega).
      \end{equation}

A ``linear'' sufficient condition for $-\mathcal{H}$ to be nonnegative definite is given by 
$
 \sigma \le {\rm div} \, (P \mathbf{g}), 
$
where $\mathbf{g} \in L^2_{{\rm loc}} (\Omega)^n$ satisfies the inequality
\[
\int_\Omega (P \mathbf{g}\cdot \mathbf{g}) \, h^2 \, dx \le \frac{1}{4}\int_\Omega 
(P \nabla h\cdot P \nabla h) \, dx, \quad \textrm{for all} \, \, h \in C^\infty_0(\Omega).
\]
 Here $P \mathbf{g}\cdot \mathbf{g}$ is an \textit{admissible} measure (Sec. \ref{Section 2}). However, 
such conditions are not necessary, with any constant in place of $ \frac{1}{4}$, even when $P=I$; see 
\cite{JMV1}.

We recall that in Proposition \ref{prop_1} above, 
 the  nonnegative definite quadratic form $[h]_{\mathcal{H}}^2$ is associated with  the Schr\"{o}dinger operator $\mathcal{H}$  with real-valued coefficients  
 $
  P ={\rm Re} \, A^s,
 \quad 
 \sigma={\rm Re} \, c - \frac {1}{2}{\rm div} \, ({\rm Re} \, \mathbf{b}). 
 $
 
 Hence,  \eqref{g-cond1} characterizes 
      the first condition of Proposition \ref{prop_1}  given by 
      \eqref{E:1.b}. The second one, the commutator condition \eqref{E:1.c}, will be discussed further in Sections \ref{sec2.5} 
      and \ref{sec2.6}; see also an example in Sec. \ref{ex}.

Notice that, even for form bounded $\sigma$ such that 
 \begin{equation}\label{f-bdd}
\vert \langle \sigma, h^2\rangle \vert \le C \, ||\nabla h||^2_{L^2(\Omega)}, 
\quad \textrm{for all} \, \, h \in C^\infty_0(\Omega), 
 \end{equation}
the fact that  $-\mathcal{H}$ is nonnegative definite
 is not equivalent 
to the existence of a positive solution  $u$ to the Schr\"{o}dinger equation $\mathcal{H} u=0$. In other words, in our setup, the Allegretto-Piepenbrink theorem is generally not true. 
See \cite{JMV1}, \cite{MV1}, \cite{MV2},  and the literature cited there for further discussion.

\subsection{The one-dimensional case}\label{sec2.4} 

In the one-dimensional case, it is possible to avoid problems 
with  commutator estimates 
using methods  of ordinary differential equations (\cite{Ha}, \cite{Hi}). In particular, the following  theorem gives a generalization of Theorem \ref{thm-real} for complex-valued coefficients in the one-dimensional case. In the statements below we will make use of the standard convention $\frac{0}{0}=0$.  

\begin{theorem}\label{Theorem 0} Let $I \subseteq \R$ be an open interval (possibly unbounded). Let $a, b, c  \in D'(I)$, and $\mathcal{L} u = (a \, u')' + b u' + c$. Suppose that $p={\rm Re} \, a  \in L^1_{{\rm loc}} (I)$, and ${\rm Im} \,  b \in L^1_{{\rm loc}} (I)$. 

{\rm (i) } The operator $-\mathcal{L}$ is accretive  
 if and only if  $\frac{({\rm Im} \,  b)^2}{p}\in L^1_{{\rm loc}}(I)$, where $p \ge 0$ a.e.,
and the following quadratic form inequality holds:
\begin{equation}\label{0.1}
\int_{I} p (h')^2 dx -\langle {\rm Re} \, c - \frac{1}{2}({\rm Re} \, b)',  h^2 
\rangle - \int_{I} \frac{({\rm Im} \, b)^2}{4p}\, h^2 \, dx \ge 0,
\end{equation}
for all real-valued $h \in C^{\infty}_0(I)$.

{\rm (ii) }    If  there exists a function 
$f \in L^1_{{\rm loc}}(I)$ such that $ \frac{f^2}{p} \in L^1_{{\rm loc}}(I)$, and 
\begin{equation}\label{0.2}
{\rm Re} \, c - \frac{1}{2}({\rm Re} \, b)' 
- \frac{({\rm Im} \, b)^2}{4 p}\le f' - \frac{f^2}{p} \quad \textrm{in} \, \, D'(I), 
\end{equation}
then the operator $-\mathcal{L}$ is accretive. 

Conversely, if $-\mathcal{L}$ is accretive, and 
$m\le p(x)\le M$ a.e. for some constants $M, m>0$, then 
there exists a function 
$f \in L^2_{{\rm loc}}(I)$ such that \eqref{0.2} holds. 
\end{theorem}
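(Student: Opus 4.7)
The strategy is to reduce the one-dimensional accretivity problem for $\mathcal{L}$ to a real-coefficient Schr\"odinger problem via a phase shift (gauge transformation) that trivializes the commutator condition (\ref{E:1.c}) of Proposition~\ref{prop_1}; the resulting real problem is then handled by Theorem~\ref{thm-real}.

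The key computation is a polar decomposition identity. For $\phi \in C_0^\infty(I)$, writing $\phi = h e^{i\mu}$ with $h = |\phi|$ and $\mu$ a local phase, and expanding the sesquilinear form (\ref{sesq-l}) (after one integration by parts of the real part of the drift term and completion of the square in $\mu'$), one arrives at
\[
{\rm Re}\langle -\mathcal{L}\phi, \phi\rangle = \int_I p (h')^2\,dx - \langle \sigma, h^2\rangle - \int_I \frac{({\rm Im}\, b)^2}{4p}\, h^2\, dx + \int_I p\left(\mu' + \frac{{\rm Im}\, b}{2p}\right)^2 h^2\, dx,
\]
where $\sigma = {\rm Re}\, c - \tfrac{1}{2}({\rm Re}\, b)'$. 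The last term is nonnegative and vanishes at the optimal phase $\mu' = -{\rm Im}\, b / (2p)$. Part~(i) then follows at once: the sufficiency of (\ref{0.1}) because the last term only helps, and the necessity by testing on $\phi_\varepsilon = h\, e^{i\mu_\varepsilon}$ with $\mu_\varepsilon$ an antiderivative of $-{\rm Im}\, b/(2(p+\varepsilon))$ and passing $\varepsilon \to 0^+$ via Fatou. The local integrability $({\rm Im}\, b)^2/p \in L^1_{\rm loc}$ (with convention $0/0=0$) is recovered from the same identity applied to sharply concentrated $h$.

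For Part~(ii), observe that (\ref{0.1}) is exactly the statement that the auxiliary real-coefficient Schr\"odinger operator $\widetilde{\mathcal H} u = (p u')' + \bigl[\sigma + \frac{({\rm Im}\, b)^2}{4p}\bigr] u$ is nonnegative definite. The Riccati inequality (\ref{0.2}) is then the one-dimensional specialization of Theorem~\ref{thm-real}(i) applied to $\widetilde{\mathcal H}$, via the substitution $g = f/p$ (so that $(pg)' - pg^2 = f' - f^2/p$); this yields the sufficiency. Under the uniform ellipticity $m \le p \le M$, the same operator $\widetilde{\mathcal H}$ satisfies the hypothesis of Theorem~\ref{thm-real}(ii), which produces $g \in L^2_{\rm loc}(I)$, and hence $f = pg \in L^2_{\rm loc}(I)$ thanks to $p \le M$.

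The principal technical obstacle is the rigorous justification of the polar decomposition identity: $\mu$ is undefined on the zero set of $\phi$, and the ``target'' phase whose derivative is $-{\rm Im}\, b/(2p)$ is ill-defined where $p$ vanishes. I would handle both via a double regularization, replacing $p$ by $p_\varepsilon = p + \varepsilon$ and $|\phi|$ by $\sqrt{|\phi|^2 + \delta^2}$, establishing the identity for the smooth regularized data, and then passing $\delta, \varepsilon \to 0^+$ by dominated convergence using the $L^1_{\rm loc}$ bound on $({\rm Im}\, b)^2/p$ (for Part (i)) or the uniform ellipticity assumption (for Part (ii)).
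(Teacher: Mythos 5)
Your overall strategy — a gauge (phase) transformation that trivializes the commutator condition of Proposition~\ref{prop_1}, followed by completing the square to reduce to a real Schr\"odinger problem handled by Theorem~\ref{thm-real} — is precisely the paper's strategy. The polar identity you write down is correct, and your treatment of Part~(ii), via $g = f/p$ and Theorem~\ref{thm-real}, coincides with the paper's.

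However, there is a genuine gap in the regularization you propose for Part~(i), and it is exactly the technical difficulty the paper's proof is engineered to avoid. Writing $\phi = h\,e^{i\mu}$ with $h = |\phi|$ makes $h$ only Lipschitz and $\mu$ discontinuous across zeros of $\phi$; but hypothesis \eqref{0.1} is posed for $h \in C^\infty_0(I)$, and since $\sigma$ is a distribution there is no automatic extension of \eqref{0.1} to Lipschitz $h$. Your double regularization does not repair this: $\phi \ne \sqrt{|\phi|^2 + \delta^2}\,e^{i\mu_\delta}$ for any phase $\mu_\delta$, so the ``identity for the regularized data'' does not actually hold — there are error terms you would still have to control. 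The same obstruction appears in your necessity argument: with ${\rm Im}\,b$ merely in $L^1_{\rm loc}$, an antiderivative of $-{\rm Im}\,b/(2(p+\varepsilon))$ is only $W^{1,1}_{\rm loc}$, so $\phi_\varepsilon = h\,e^{i\mu_\varepsilon}$ is not in $C^\infty_0(I)$ and cannot be fed directly into the accretivity hypothesis.

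The paper circumvents both issues by working with the gauge at the level of test functions from the start: it sets $u = z\,e^{-i\lambda}$ where $\lambda$ is built by integrating \emph{mollified} coefficients $\tilde b_\varepsilon/(p_\varepsilon + \delta)$ cut off by a smooth $\eta$, so that $\lambda \in C^\infty_0(I)$ and $z = u\,e^{i\lambda}$ stays in $C^\infty_0(I)$. With $z = f+ig$ one never needs $|\phi|$; the inequality one needs is \eqref{0.1} applied to the smooth functions $f$ and $g$ separately. The mollification of the coefficients is therefore not optional but essential. In addition, in the sufficiency direction the paper needs a Jensen-type inequality $(\tilde b_\varepsilon)^2 \le (p_\varepsilon + \delta)\big(\tilde b^2/(p+\delta)\big)_\varepsilon$ to pass from \eqref{0.1} to its mollified form (cf.\ \eqref{2.1ss} and \eqref{2.1a}); this step is absent from your plan. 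If you adopt the paper's device of mollifying $p$, ${\rm Im}\,b$ and $\sigma$ (after a cut-off) and perform the gauge with $z$ rather than $|\phi|$, your completion-of-the-square computation goes through without modification, and the missing Jensen step closes the sufficiency argument.
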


\begin{remark}
  Clearly, the function $f$ in \eqref{0.2} and $g$ in 
 Theorem \ref{thm-real} are related through $f = p \, g$. The condition that $p$ is uniformly bounded above and below by positive 
 constants in statement (ii)  of Theorem \ref{Theorem 0} can be relaxed to $\frac{1}{p} \in L^1_{{\rm loc}}(I)$, with $\frac{f^2}{p} \in L^1_{{\rm loc}}(I)$ in place of $f \in L^2_{{\rm loc}}(I)$. 
\end{remark}

\begin{remark}  As was mentioned above, the assumptions on $p={\rm Re} \, a \ge 0$ in Theorem \ref{Theorem 0} can be substantially relaxed. 
In general, $p$ is a Radon measure in $I$. It is easy to see 
that condition \eqref{0.1} with $\rho$ in place of $p$, where $\rho = \frac{d p}{dx}$ is the absolutely continuous part 
 of the measure $p$, is sufficient for  $-\mathcal{L}$ to be accretive. 
 
 On the other hand,  $-\mathcal{L}$ 
 is accretive if, for instance,  
$a =2 \delta_{x_0}$, 
 $ c=- 2  \delta_{x_0}$, and $b= i \delta_{x_0}$, where $x_0 \in I$. This example is immediate from 
 Proposition \ref{prop_1}. Operators with  measure-valued $A$ in the principal 
 part  $\mathcal{A} = {\rm div} \, (A\nabla\cdot)$ are treated in \cite{CiM} in the context of $L^p$-dissipativity.  
\end{remark}

The characterization of accretivity   obtained in Theorem 
\ref{Theorem 0} in the one-dimensional case does not involve 
${\rm Im} \, a$ and ${\rm Im} \, c$. However,  
${\rm Im} \, b$ plays an important role.  In higher dimensions, the situation is more complicated.  
 The term ${\rm Im} \, b$ may contain both the irrotational and divergence-free 
 components, the latter in combination 
with  
${\rm Im} \, A^c$. (See Theorem \ref{thm-i2}, and Example in Sec. \ref{ex} below.) 

There is an analogue of Theorem \ref{Theorem 0} in higher dimensions for operators 
with complex-valued coefficients, but only 
in the case where $\tilde{\mathbf{b}}$ has a specific form, for instance, if 
$\tilde{\mathbf{b}} = P \nabla \lambda$ for some $\lambda \in D'(\Omega)$. 
More general vector fields are treated in Theorem \ref{thm-grad} below.

\subsection{Upper  and lower bounds of quadratic forms}\label{sec2.5}

Returning now to general operators with complex-valued coefficients in the case $n \ge 2$, we recall that the first condition of Proposition \ref{prop_1}  is  necessary 
 for the accretivity of $-\mathcal{L}$, namely, 
\begin{equation}\label{nec}
  \langle  \sigma \,  h, h\rangle 
\le  \int_\Omega (P \nabla h \cdot \nabla h) \, dx,  
\end{equation}
for all real-valued $h \in C^\infty_0(\Omega)$, where $\sigma = {\rm Re } \, c - \frac{1}{2}{\rm div} ({\rm Re} \,  \mathbf{b}) \in D'(\Omega)$, and 
${\rm Re} \, A^{s}=P \in D'(\Omega)^{n \times n} $ is a nonnegative definite matrix. 

Suppose now that $\sigma$
has a slightly smaller upper form bound, that is, 
\begin{equation}\label{E:ia}
  \langle  \sigma \,  h, h\rangle 
\le (1-\epsilon^2)  \int_\Omega (P \nabla h\cdot \nabla h) \, dx, \quad h \in C^\infty_0(\Omega),  
\end{equation}
for some $\epsilon \in (0, 1]$. We also consider the corresponding lower bound, 
\begin{equation}\label{E:ib}
  \langle  \sigma \,  h, h\rangle 
\ge - K  \int_\Omega (P \nabla h\cdot \nabla h) \, dx, \quad h \in C^\infty_0(\Omega),  
\end{equation}
for some  constant $K\ge 0$. 

Such restrictions on  real-valued $\sigma \in D'(\Omega)$ were 
invoked in \cite{JMV1}, for uniformly elliptic $P$.  

\begin{remark} Notice that 
\eqref{E:ia} is obviously satisfied for any $\epsilon \in (0, 1)$, 
up to an extra term  $C \, ||h||^2_{L^2(\Omega)}$, 
 if 
$\sigma$ is \textit{infinitesimally form bounded}  (\cite{RS}), i.e.,
\[
\left \vert 
\langle \sigma, h^2\rangle
\right \vert\le \epsilon \, || \nabla h||^2_{L^2(\Omega)} + 
C(\epsilon) \, ||h||^2_{L^2(\Omega)}, \quad h \in C^\infty_0(\Omega),   
\]
for any $\epsilon\in (0, 1)$. This property was characterized in 
 \cite{MV5}. The second term  on the right is sometimes  included in the definition of accretivity of the operator $-\mathcal{L}$. We can always incorporate it 
as a constant term in $\sigma -C(\epsilon)$. The same is true with regards to the lower bound where we can use $\sigma +C(\epsilon)$.
\end{remark}

If both bounds \eqref{E:ia} and \eqref{E:ib} hold for some $\epsilon\in (0, 1]$ and $K\ge 0$, then obviously 
\[
\epsilon   \int_\Omega (P \nabla h\cdot \nabla h) \, dx \le [h]^2_{\mathcal{H}} \le (K+1)^{\frac{1}{2}}  \int_\Omega ( P \nabla h \cdot \nabla h) \, dx, \quad h \in C^\infty_0(\Omega). 
\]

Assuming that $P$ satisfies the uniform ellipticity assumptions \eqref{ell}, we see that in this case 
condition \eqref{E:1.c} is equivalent, up to a constant multiple, to 
\begin{equation}\label{E:i.e}
\left \vert \langle \tilde{\mathbf{b}},  u \nabla v -v \nabla  u\rangle \right\vert 
\le C \,  ||\nabla u||_{L^2(\Omega)} \, ||\nabla v||_{L^2(\Omega)}
\end{equation}
where $C>0$ is a constant which does not depend on real-valued $u, v \in C^\infty_0(\Omega)$. 

This commutator inequality was characterized completely 
in the case $\Omega=\R^n$ in \cite[Lemma 4.8]{MV3} 
for complex-valued $u, v$. Clearly, that characterization works also 
in the case of real-valued $u, v$ as well (with a change of the constant $C$ up to a factor of $\sqrt{2}$).

\subsection{Main theorem on the entire space}\label{sec2.6} 

Combining the characterization of the commutator inequality \eqref{E:i.e} 
obtained in  \cite{MV3} with 
Proposition \ref{prop_1}, we deduce our main theorem in the case $\Omega=\R^n$. We employ separately the lower  bound \eqref{E:ib}  in the necessity part, and the upper bound \eqref{E:ia}  in the sufficiency part.

\begin{theorem}\label{thm-i2} Let $\mathcal L$ be a second order differential 
operator in divergence form \eqref{E:0.b} with complex-valued coefficients $A \in D'(\R^n)^{n\times n}$, 
 $\mathbf{b} \in D'(\R^n)^n$ and $c \in D'(\R^n)$ ($n \ge 2$).  Let $P$, 
 $\tilde{\mathbf{b}}$ and $\sigma$ be given by \eqref{expr}, where $P$ is uniformly 
 elliptic. 
 
  {\rm (i) } Suppose that $-\mathcal{L}$ is accretive, i.e.,  \eqref{accr}  holds, and 
suppose that  \eqref{E:ib} holds for some $K\ge 0$. 
 
 {\rm (a) } If $n \ge 3$,  then  $\tilde{\mathbf{b}}$  can be represented in the form 
\begin{equation}\label{E:ig}
\tilde{\mathbf{b}}  = \nabla f + {\rm Div} \, G, 
\end{equation} 
where $f\in D'(\R^n)$ is real-valued, and there exists a 
positive constant $C$ so that 
\begin{equation}\label{E:iga}
\int_{\R^n} |\nabla f |^2 h^2  dx \le C \, \int_{\R^n} |\nabla h|^2 dx, \quad \textrm{for all} \, \, h \in C^\infty_0(\R^n), 
\end{equation} 
 and $G \in {\rm BMO}(\R^n)^{n\times n}$ is 
a real-valued skew-symmetric matrix field. 

Moreover,   
$f$ and  $G$ above can be defined explicitly as 
\begin{equation}\label{E:igb}
f = \Delta^{-1} ( {\rm div } \, \tilde{\mathbf{b}}), \quad 
G =\Delta^{-1} ( {\rm Curl} \, \tilde{\mathbf{b}}),
\end{equation}
where the constant $C$ in \eqref{E:iga} and the ${\rm BMO}$-norm of $G$ may depend on $K$. 

 {\rm (b) } If $n=2$, then 
$\tilde{\mathbf{b}} = (-\partial_2 g, \partial_1 g)$, where 
 $g \in {\rm BMO}(\R^2)$ is  a real-valued function so that  ${\rm div} (\tilde{\mathbf{b}})=0$.    
 \smallskip

{\rm (ii) } Conversely, suppose that  \eqref{E:ia} holds 
for some $\epsilon \in (0, 1]$. Then $-\mathcal{L}$ is accretive  
if representation \eqref{E:ig} holds when $n \ge 3$, or $f=0$ and $\tilde{\mathbf{b}} = (-\partial_2 g, \partial_1 g)$ when $n=2$, so that both the constant $C$ in \eqref{E:iga} and the 
${\rm BMO}$-norm of $G$ (or $g$ when $n=2$) 
are small enough,  
depending only on  $\epsilon$. 
\end{theorem}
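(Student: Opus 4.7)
The plan is to combine Proposition \ref{prop_1} with the characterization of the ${\rm BMO}$-type commutator inequality \eqref{E:i.e} established in \cite[Lemma~4.8]{MV3}. By Proposition \ref{prop_1}, accretivity of $-\mathcal L$ is equivalent to three conditions: $P$ nonnegative definite (immediate from uniform ellipticity), the Schr\"odinger form bound \eqref{E:1.b}, and the commutator inequality \eqref{E:1.c}. The discussion preceding the theorem shows that, when $P$ is uniformly elliptic and both one-sided form bounds \eqref{E:ia}, \eqref{E:ib} are in force, then
\[
\epsilon\sqrt{m}\,\|\nabla h\|_{L^2(\R^n)} \;\le\; [h]_{\mathcal H} \;\le\; \sqrt{(K+1)M}\,\|\nabla h\|_{L^2(\R^n)},
\]
so \eqref{E:1.c} becomes equivalent, up to constants that depend only on $\epsilon, K, m, M$, to the ${\rm BMO}$-type commutator inequality \eqref{E:i.e}.

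For the necessity direction (i), I assume $-\mathcal L$ is accretive and that the lower bound \eqref{E:ib} holds; Proposition \ref{prop_1} then yields \eqref{E:1.c}, and the upper half of the norm equivalence above rewrites this as \eqref{E:i.e} for $\tilde{\mathbf b}$. I then invoke \cite[Lemma~4.8]{MV3}, which characterizes exactly which distributional vector fields on $\R^n$ satisfy \eqref{E:i.e}: for $n\ge 3$ one obtains the decomposition $\tilde{\mathbf b}=\nabla f + {\rm Div}\, G$ with $\nabla f$ a pointwise form multiplier from $L^{1,2}$ to $L^2$ (which is \eqref{E:iga}) and $G$ a skew-symmetric matrix field in ${\rm BMO}$. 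The explicit formulas \eqref{E:igb} are obtained by applying $\Delta^{-1}$ (the Riesz potential of order $2$) to the distributional identities $\Delta f = {\rm div}\, \tilde{\mathbf b}$ and $\Delta G = {\rm Curl}\, \tilde{\mathbf b}$. In the case $n=2$, the skew-symmetric matrix $G$ has a single independent entry $g$, so a direct computation yields ${\rm Div}\, G = (-\partial_2 g, \partial_1 g)$; moreover, the two-dimensional version of \cite[Lemma~4.8]{MV3} rules out a nonzero gradient part (the scaling of the pointwise multiplier condition fails for a genuine $\nabla f$ piece in $\R^2$), forcing $f=0$ and hence ${\rm div}\, \tilde{\mathbf b}=0$.

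For the sufficiency direction (ii), I assume the representation \eqref{E:ig} with small constants and the upper form bound \eqref{E:ia}. The converse half of \cite[Lemma~4.8]{MV3} turns the representation into \eqref{E:i.e} with a constant $C'$ bounded linearly in the constant of \eqref{E:iga} and $\|G\|_{{\rm BMO}}$ (or $\|g\|_{{\rm BMO}}$). On the other hand, \eqref{E:ia} and uniform ellipticity give the lower norm bound $[u]_{\mathcal H}\ge \epsilon\sqrt{m}\,\|\nabla u\|_{L^2}$, which promotes \eqref{E:i.e} to \eqref{E:1.c} as soon as $C'\le \epsilon^2 m$. This is arranged by choosing the multiplier constant in \eqref{E:iga} and the ${\rm BMO}$-norm of $G$ sufficiently small in terms of $\epsilon$ (with $m, M$ fixed). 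Together with \eqref{E:1.b}, which is just a restatement of \eqref{E:ia} in the notation $[\,\cdot\,]_{\mathcal H}$, and with $P\ge 0$, Proposition \ref{prop_1} delivers the accretivity of $-\mathcal L$.

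The hardest part will be justifying the explicit formulas \eqref{E:igb} at the level of generality of $\tilde{\mathbf b}\in D'(\R^n)^n$, since $\Delta^{-1}$ is canonical on tempered distributions only modulo harmonic polynomials; one must verify that ${\rm div}\, \tilde{\mathbf b}$ and ${\rm Curl}\, \tilde{\mathbf b}$ lie in a distributional class (controlled by \eqref{E:i.e} itself) in which the Riesz potential produces a representative whose gradient/row-divergence recovers $\tilde{\mathbf b}$ up to a field with zero commutator pairing. A secondary but delicate point is the quantitative bookkeeping of all constants through the equivalence $[\,\cdot\,]_{\mathcal H}\sim \|\nabla\cdot\|_{L^2}$, so that the smallness thresholds in (ii) end up depending only on $\epsilon$ as stated.
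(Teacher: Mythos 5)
Your proposal is correct and follows essentially the same route as the paper: Proposition \ref{prop_1} reduces accretivity to the quadratic form inequality \eqref{E:1.b} and the commutator inequality \eqref{E:1.c}; the one-sided form bounds \eqref{E:ia}, \eqref{E:ib} together with uniform ellipticity give the two-sided comparison between $[\cdot]_{\mathcal H}$ and the Dirichlet norm (you correctly use only \eqref{E:ib} for the necessity direction and only \eqref{E:ia} for the sufficiency direction); and the characterization of the resulting ${\rm BMO}^{-1}$-type commutator inequality \eqref{E:i.e} is then read off from \cite[Lemma~4.8]{MV3}, yielding the Hodge decomposition \eqref{E:ig}--\eqref{E:igb} and the quantitative smallness threshold in (ii). This matches the paper's own (condensed) argument in Sections~\ref{sec2.5}--\ref{sec2.6}.
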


\begin{remark}  Notice that the condition  imposed   on 
the divergence-free component  ${\rm Div} \, G$ in the Hodge decomposition \eqref{E:ig}
 is much weaker that the condition   
on the irrotational component $\nabla f$.

 In particular, 
 \eqref{E:iga}   means that $|\nabla f|^2 dx \in 
\mathfrak{M}^{1,2}(\R^n)$ is an \textit{admissible measure}. Several equivalent characterizations of  the class $\mathfrak{M}^{1,2}(\R^n)$ are discussed in Sec. \ref{Section 2}  below. 
\end{remark}

\begin{remark} Under the assumptions of Theorem \ref{thm-i2}, 
$
\tilde{\mathbf{b}}\in {\rm BMO}^{-1}(\R^n)^n$ (see Sec. \ref{Section 2}). A thorough discussion of the  
space ${\rm BMO}^{-1}(\R^n)$ and its applications is given in \cite{KT}. 
\end{remark}  

\begin{remark} In \eqref{E:igb},  the Newtonian potential $\Delta^{-1}$ is understood  in terms of the weak-$*$ ${\rm BMO}$ convergence (see \cite{MV3}, \cite{St}), and 
${\rm Div}$ and ${\rm Curl}$ are the usual matrix operators defined in Sec. \ref{Section 2}. 

In the case $n=3$, we can use the usual vector-valued ${\rm curl} (\mathbf{g}) \in D'(\R^3)^3$ in place of ${\rm Div} \, G$ in decomposition \eqref{E:ig}, with   
$\mathbf{g} =\Delta^{-1} ( {\rm curl} \, \tilde{\mathbf{b}})$ in \eqref{E:igb}.
\end{remark}

\subsection{Example}\label{ex} 

We conclude Sec. \ref{main} with an example in two dimensions that demonstrates possible interaction between the principal part and lower order terms in the accretivity 
problem for operators with complex-valued coefficients. 

 Consider the operator $\mathcal{L}= {\rm div} \, (A \nabla \cdot)+\mathbf{b} \cdot \nabla + c$ in $\R^2$ 
with $A=(a_{jk})$, where $a_{11}=a_{22}=1$, $a_{12}=-a_{21} =i \, \lambda \, \log |x|$,  $\mathbf{b}=-x |x|^2$ and $c=-2|x|^2$, 
where $x \in \R^2$ and $\lambda \in \R$. 

If $|\lambda| \le  C$, where $C$ is an absolute constant, then by statement (ii) of  Theorem \ref{thm-i2}, the operator 
 $-\mathcal{L}$ is accretive due to the interaction between the principal part, the drift term,  and 
 the zero-order term  (harmonic oscillator).

In this example $P=I$, $\sigma= 0$, $\tilde{\mathbf{b}}= (-\partial_2 g, \partial_1 g)$, where $g = \frac{\lambda}{2}  \log |x|\in {\rm BMO} \, (\R^2)$. The upper bound 
\eqref{E:ia} obviously holds for any $\epsilon\in (0, 1]$, but the lower bound  \eqref{E:ia} 
 fails. 
 
 We note in passing that, by Proposition \ref{prop_1}, the optimal value of the constant 
 $|\lambda|$ 
in this example  is found  from the inequality
 \begin{equation}\label{jacob}
 \left \vert \int_{\R^2} g(x) \, J[u,v] \, dx
 \right \vert \le  ||\nabla u||_{L^2(\R^2)} ||\nabla u||_{L^2(\R^2)}, 
 \end{equation}
 for all real-valued $u, v \in C^\infty_0(\R^2)$, where $J[u,v] = \frac{\partial u}{\partial x_1} \frac{\partial v}{\partial x_2} - \frac{\partial u}{\partial x_2}  \frac{\partial v}{\partial x_1}$ is the determinant 
 of the Jacobian matrix $\mathbf{D}(u, v)$ (see \cite{CLMS}, \cite{MV3}, and 
 Sec. \ref{Section 2} below).

\section{Proofs of Proposition \ref{prop_1} and Theorem \ref{thm-real}}\label{proof-1}

\begin{proof}[Proof of Proposition \ref{prop_1}.]
Clearly, the principal part of ${\rm Re} \, \langle \mathcal{L} u, u\rangle$ 
depends only on ${\rm Re} \, A^s$ and ${\rm Im} \, A^c$, since for $u =f + i g\in D'(\Omega)$ ($f, g$ are real-valued), we have 
\begin{equation*}\label{E:1.1b}
 \begin{split}
  -{\rm Re} \,  \langle \mathcal{A} \, u, u \rangle  &  =  \langle {\rm Re} A^s \, \nabla f, \, \nabla f \rangle + 
   \langle {\rm Re} A^s \, \nabla g, \, \nabla g \rangle - 2 \, \langle {\rm Im} A^c \, \nabla f, \, \nabla g \rangle\\ & =  
    \langle {\rm Re} A^s \, \nabla f, \, \nabla f \rangle + 
   \langle {\rm Re} A^s \, \nabla g, \, \nabla g \rangle + 2 \, \langle 
   {\rm div} \, ({\rm Im} A^c \, \nabla f), \,  g \rangle 
   \\ & =  
    \langle {\rm Re} A^s \, \nabla f, \, \nabla f \rangle + 
   \langle {\rm Re} A^s \, \nabla g, \, \nabla g \rangle   \\ & \, \, +  \langle 
   {\rm div} \, ({\rm Im} A^c \, \nabla f), \,  g \rangle -  \langle 
   {\rm div} \, ({\rm Im} A^c \, \nabla g), \,  f \rangle.
    \end{split}
\end{equation*}

Since $A^c$ is skew-symmetric, it follows that  
\[
{\rm div} \, (A^c \nabla u) = - {\rm Div}(A^c) \cdot \nabla u \quad {\rm in} \, \, D'(\Omega),
\] 
where the vector field 
${\rm Div}(A^c)$ is solenoidal (divergence free). 
In particular, 
\[
{\rm div} \, ({\rm Im} A^c \, \nabla f) = -{\rm Div}({\rm Im} \, A^c) \nabla f,  
\]
for real-valued $f$. 

Letting $\mathbf{b_1}=  \mathbf{b} -  \, {\rm Div}(A^c) $, we see 
that the skew symmetric-part $A^c$ can always be included in the first-order term $\mathbf{b}_1 \cdot \nabla$, and hence 
does not affect the principal part of $\mathcal{L}$. Consequently, 
we have 
\begin{equation*}
\begin{split}
\langle -\mathcal{L} u, u\rangle & = \langle P \nabla f, \nabla f\rangle 
+ \langle P \nabla g, \nabla g\rangle 
- \langle  \mathbf{b_1}, f \nabla f + g \nabla g\rangle \\ &
- i \langle  \mathbf{b_1}, f \nabla g - g \nabla f\rangle -\langle  c, f^2 + g^2\rangle, 
\end{split}
\end{equation*}
and 
\begin{equation*}
\begin{split}
{\rm Re} \, \langle -\mathcal{L} u, u\rangle & = 
\langle P \nabla f, \nabla f\rangle 
+ \langle P \nabla g, \nabla g\rangle  - \langle {\rm Re} \,  \mathbf{b_1}, f \nabla f + g \nabla g\rangle 
\\ & + \langle {\rm Im} \,  \mathbf{b_1}, f \nabla g - g \nabla f\rangle -\langle {\rm Re} \,  c, f^2 + g^2\rangle.
\end{split}
\end{equation*}

Integrating by parts, 
and using the fact that ${\rm div} \, ({\rm Re} \,  \mathbf{b_1})={\rm div} \, ({\rm Re} \,  \mathbf{b})$, 
we deduce  
\[
\langle {\rm Re} \,  \mathbf{b_1}, f \nabla f  + g \nabla g\rangle  = - \frac{1}{2} 
\langle {\rm div} \, ({\rm Re} \,  \mathbf{b}), f^2  + g^2\rangle.
\]
It follows  that 
\begin{equation*}
\begin{split}
{\rm Re} \, \langle -\mathcal{L} u, u\rangle & = 
\langle P \nabla f, \nabla f\rangle 
+ \langle P \nabla g, \nabla g\rangle  \\ & + \langle {\rm Im} \,  \mathbf{b_1}, f \nabla g - g \nabla f\rangle - 
\langle  \sigma, f^2 + g^2\rangle,
\end{split}
\end{equation*}
where 
$
\sigma={\rm Re} \,  c - \frac{1}{2} {\rm div} \, ({\rm Re} \,  \mathbf{b}). 
$

This proves that ${\rm Re} \, \langle \mathcal{L} u, u\rangle = {\rm Re} \, \langle \mathcal{L}_2 u, u\rangle$, where $\mathcal{L}_2$ is defined by \eqref{L-2a}. 
Thus, \eqref{L-2} holds. 

Interchanging the roles of $f$ and $g$ we deduce that ${\rm Re} \, \langle \mathcal{L} u, u\rangle\ge 0$ if and only if 
\begin{equation*}
 \langle P \nabla f, \nabla f\rangle 
+ \langle P \nabla g, \nabla g\rangle    - \langle   \sigma, f^2 + g^2\rangle  \ge  \left \vert  \langle {\rm Im} \,  \mathbf{b_1}, f \nabla g - g \nabla f\rangle \right \vert. 
\end{equation*}

Using the quadratic form  $[f]^2_{\mathcal{H}}$ defined by \eqref{E:1.b}, 
we rearrange the preceding inequality  as follows,
\begin{equation}\label{rearr}
 \left \vert  \langle {\rm Im} \,  \mathbf{b_1}, f \nabla g - g \nabla f\rangle \right \vert  \le [f]^2_{\mathcal{H}} + [g]^2_{\mathcal{H}}.
\end{equation}
for all   are real-valued $f, g \in C^\infty_0(\Omega)$. Clearly,  the right-hand side of this inequality equals 
$
[f]^2_{\mathcal{H}} + [g]^2_{\mathcal{H}}= [u]^2_{\mathcal{H}},
$
where $[u]^2_{\mathcal{H}}=\langle - \mathcal{H} u, u\rangle\ge 0$ for every complex-valued  $u\in C^{\infty}_0(\Omega)$.  In particular, $-\mathcal{H}$ is nonnegative definite.

 Replacing $f, g$ in \eqref{rearr} with $\alpha f, \frac{1}{\alpha} g$ respectively, and minimizing over all real $\alpha\not=0$, we  deduce 
that ${\rm Re} \, \langle \mathcal{L} u, u\rangle\ge 0$ if and only if 
\begin{equation}\label{mult}
  \left \vert  \langle {\rm Im} \,  \mathbf{b_1}, f \nabla g - g \nabla f\rangle \right \vert \le 2 \, [f]_{\mathcal{H}}  [g]_{\mathcal{H}},
\end{equation}
where $f, g \in C^{\infty}_0(\Omega)$ are real-valued, provided $[u]^2_{\mathcal{H}}\ge 0$ for every complex-valued (or equivalently real-valued) $u\in C^{\infty}_0(\Omega)$.  Clearly, $\frac{1}{2} {\rm Im} \,   \mathbf{b_1}=  \tilde{\mathbf{b}}$, where 
$ \tilde{\mathbf{b}}$ is defined by \eqref{expr}, so that \eqref{mult} coincides 
with \eqref{E:1.c}.  

It remains  to show that if $-\mathcal{L}$ is an accretive operator, then 
$P={\rm Re} \, A^s$ is a non-negative definite matrix. Let 
$u= e^{it \, x \cdot \xi} \, v$, where $v\in C^\infty_0(\Omega)$ is real-valued, 
$t \in \R$ and $\xi \in \R^n$. Then clearly, 
\begin{equation*}
\begin{split}
\langle -\mathcal{L}u, u\rangle = & t^2 \langle (A \xi \cdot \xi) \, v, v\rangle +
 \langle A  \nabla v, \nabla v\rangle 
+ i t \langle (A \xi ) v, \nabla v\rangle\\ &
 - i t \langle A \nabla v, v \xi \rangle- i t \langle (\mathbf{b} \cdot \xi)v, v\rangle 
- \langle  \mathbf{b} \cdot \nabla v, v \rangle - \langle  c \,  v, v \rangle. 
\end{split}
\end{equation*}

It follows, 
\begin{equation*}
\begin{split}
 {\rm Re} \, \langle -\mathcal{L}u, u\rangle  = & t^2 \langle (P \xi \cdot \xi) \, v, v\rangle + \langle P  \nabla v, \nabla v\rangle 
- t \langle ({\rm Im} \, A) \xi  v, \nabla v\rangle \\ &
+  t \langle ({\rm Im} \, A)  \nabla v, v \xi \rangle + t   \langle ({\rm Im} \mathbf{b} \cdot \xi)v, v\rangle  - \langle  \sigma \,  v, v \rangle\ge 0.
\end{split}
\end{equation*}

Dividing both sides by $t^2$ and letting $t\to \infty$, we immediately get that 
 $\langle (P \xi \cdot \xi) \, v, v\rangle \ge 0$ for every real-valued $v\in C^\infty_0(\Omega)$. Then, for any $h \in C^\infty_0(\Omega)$, $h \ge 0$, denote 
 by $\eta\in C^\infty_0(\Omega)$ a cut-off function such that $\eta \, h =h$. 
 Setting $v= \eta (h + \delta)^{\frac{1}{2}} \in C^\infty_0(\Omega)$, for $\delta>0$, we see that 
 $\langle P \xi \cdot \xi, h+ \delta \, \eta^2\rangle \ge 0$. Letting $\delta\to 0$ yields 
  $P \xi \cdot \xi\ge 0$ in $D'(\Omega)$. This completes the proof 
of Proposition \ref{prop_1}. 
\end{proof}

\begin{proof}[Proof of Theorem \ref{thm-real}.]
We recall some estimates for non-negative definite, symmetric matrices $P=(p_{jk})$,  
starting with the 
Schwarz inequality 
\begin{equation}\label{swarz}
\vert P  \xi \cdot \eta \vert \le  \Big(P  \xi \cdot \xi\Big)^{\frac{1}{2}}  \Big(P \eta \cdot \eta\Big)^{\frac{1}{2}}, \quad \textrm{for all} \,\, \xi, \eta \in \R^n.
\end{equation}
From \eqref{swarz} with $\eta=P \xi$, we deduce the estimate  
\[
|P \xi|^2 \le  ||P|| \, (P \xi\cdot \xi), \quad \textrm{for all} \,\, \xi\in \R^n,
\]
where $||P||$ is the operator norm of $P$. Since  $||P||\le \sum_{j, k=1}^n \, |p_{jk}|$, 
using the preceding inequality with $\xi=\mathbf{g}$, we deduce, for any $h \in C^\infty_0(\Omega)$,
\[
\int_\Omega |P \mathbf{g}| \, h^2 \,  dx \le \left( \int_\Omega \Big(P \mathbf{g}\cdot \mathbf{g}
\Big) \, 
 h^2 \,  dx \right)^{\frac{1}{2}} \left( \int_\Omega (\sum_{j, k=1}^n \, | p_{jk}(x) |) \, 
 h^2 \,  dx \right)^{\frac{1}{2}}.
\]

We now prove statement (i) of Theorem \ref{thm-real}. 
From the preceding estimate it follows that $P \in L^1_{{\rm loc}} (\Omega)^{n\times n}$ and $(P \mathbf{g}) \cdot \mathbf{g} 
        \in L^1_{{\rm loc}} (\Omega)^n$ yield 
$P \mathbf{g} \in L^1_{{\rm loc}} (\Omega)^n$.

 Applying \eqref{swarz} with $\xi=\mathbf{g}(\cdot)$ and $\eta=\nabla h(\cdot)$, we obtain
\begin{equation*}
 \begin{split}
 \left \vert  \int_{\Omega}   [(P \mathbf{g}) \cdot \nabla h] \, h \, dx \right \vert 
& \le  \int_{\Omega}  [(P \mathbf{g}) \cdot \mathbf{g}]^{\frac{1}{2}}  
[(P \nabla h) \cdot \nabla h]^{\frac{1}{2}}  \, h \, dx 
\\ & \le  \left(\int_{\Omega}  [(P \mathbf{g}) \cdot \mathbf{g}]  \, h^2 \, dx 
\right)^{\frac{1}{2}}  
\left(\int_{\Omega}  [(P \nabla h) \cdot \nabla h]  \, dx \right)^{\frac{1}{2}}.  
\end{split}
\end{equation*}

Using \eqref{g-cond},  along with the preceding inequality, and integrating by parts, we estimate, 
\begin{equation*}
 \begin{split}
& \langle \sigma, h^2 \rangle \le \langle {\rm div} \, (P \mathbf{g}), h^2 \rangle
   - \int_{\Omega}  [(P \mathbf{g}) \cdot \mathbf{g}] \,  h^2 dx 
      \\
   &  = -2   \int_{\Omega}   [(P \mathbf{g}) \cdot \nabla h] \, h \, dx - 
   \int_{\Omega}  [(P \mathbf{g}) \cdot \mathbf{g}] \,  h^2 dx 
    \\ &\le 2 \, \left ( \int_{\Omega}  [(P \mathbf{g}) \cdot  \mathbf{g}] \,  h^2 dx\right)^{\frac{1}{2}}
   \left ( \int_{\Omega}  [(P \nabla h) \cdot \nabla h] \, dx\right)^{\frac{1}{2}}
    - \int_{\Omega}  [(P \mathbf{g}) \cdot \mathbf{g}] \,  h^2 dx 
   \\ & \le \int_{\Omega}  [(P \nabla h) \cdot \nabla h] \, dx.
\end{split}
\end{equation*}

In other words, \eqref{E:1.b} holds.  Since $\tilde {\mathbf{b}} =0$, and hence the commutator condition \eqref{E:1.c}   
is vacuous, $-\mathcal{L}$ is nonnegative definite 
by Proposition \ref{prop_1}. 
This proves statement (i) of Theorem \ref{thm-real}. 

Statement (ii) of Theorem \ref{thm-real} is immediate from Proposition \ref{prop_1}, and  the corresponding result 
for the Schr\"{o}dinger operator $\mathcal{H}$ in Sec. \ref{sec2.3}, which yields the existence 
of $\mathbf{g} \in L^2_{{\rm loc}} (\Omega)$ such that \eqref{E:1.b} holds.  The proof of  Theorem \ref{Theorem 0} 
is complete.
\end{proof}

\section{Proof of Theorem \ref{Theorem 0}}\label{one-dim}

 Let $\mathcal{L} u = (a \, u')' + b u' +c$ be a second order linear 
differential operator with complex-valued distributional coefficients $a$, $b$, $c$ on an open interval $I\subseteq \R$ (possibly unbounded). As in \eqref{expr}, we define 
the associated real-valued distributions $p$, $\tilde b$, and $\sigma$ by 
\[
p= {\rm Re} \, a, \quad \tilde{b} =\frac{1}{2} \, {\rm Im} \, b, \quad \sigma = {\rm Re} \, c-\frac{1}{2} \, 
 {\rm Re} \, b'.
 \]

Proposition \ref{prop_1} gives 
a criterion of accretivity for $-\mathcal{L} $ in terms of the quadratic form inequality 
for $-\mathcal{H}$, 
\[
[h]^2_{\mathcal{H}}=\langle p h', h'\rangle - 
\langle \sigma \, h, h\rangle \ge 0, \quad \textrm{for all} \, \, h\in C^\infty_0(I),
\]
where   $\mathcal{H} h = (p h')' + \sigma \, h$  is 
the Sturm-Liouville operator on $I$, together with the commutator inequality
\[
\left \vert \langle \tilde{b}, u v'-v u'\rangle \right \vert \le \, [u]^2_{\mathcal{H}} \, 
[v]^2_{\mathcal{H}},
\]
for all real-valued $u, v \in C^\infty_0(I)$.

In the case where $p, \, \tilde{b}  \in L^1_{{\rm loc}} (I)$, 
  it is possible 
to avoid the commutator inequality by including $\tilde{b}$ in the  
stronger  quadratic form inequality \eqref{0.1}, i.e., 
\begin{equation}\label{2.1}
[h]^2_{\mathcal{N}}= \int_I p \,   (h')^2 \, dt -\langle \sigma,  h^2 
\rangle -  \int_{I}  \frac{\tilde{b}^2}{p} \, h^2 \, dt \ge 0,
\end{equation}
for all real-valued $h \in C^{\infty}_0(I)$. Here $\mathcal{N} h = (p h')' +q \, h$,  with  
\begin{equation*}
q = {\rm Re} \, c - \frac{1}{2} ({\rm Re} \, b)' -  \frac{\tilde{b}^2}{p}. 
\end{equation*} 
This means that  $-\mathcal{L}$ is accretive if and only if  the  Sturm-Liouville operator $-\mathcal{N}$ is nonnegative definite. In this case, the condition $\frac{\tilde{b}^2}{p} \in L^1_{{\rm loc}} (I)$ is necessary for accretivity. 
We recall that throughout the paper, we  are using the convention 
$\frac{0}{0}=0$. 

The reduction of the accretivity of the operator $-\mathcal{L}$ 
to the property that the Schr\"{o}dinger operator $-\mathcal{N}$ 
is nonnegative definite is performed via an  exponential 
substitution $u = z e^{- i \lambda}$, where $\lambda \in D'(\Omega)$ is 
a real-valued vector field. It actually works in the general case  $n \ge 1$, provided $\mathbf{b}$ 
has a specific form, so that 
$\tilde{\mathbf{b}} = P \nabla \lambda$, under certain restrictions 
on $P$ and $\lambda$  (see Sec. \ref{grad} below). 
If $n=1$, we will show that this is always possible 
with  $\lambda'=\frac{\tilde{b}}{p} $ in $I$. 

We now prove 
statement (i) of Theorem \ref{Theorem 0}. Our first step is to show 
that, in the general case $\Omega\subseteq \R^n$, for $n \ge 1$, we can replace  
the operator $\mathcal{L}$ in the 
corresponding quadratic form inequalities 
with the operators $\mathcal{L_\epsilon}$  with  
mollified coefficients $A_\epsilon$, $\mathbf{b}_\epsilon$, and $c_\epsilon$ ($\epsilon>0$). 

More precisely, if $u\in C^\infty_0(\Omega)$, then we can pick a smooth real-valued 
cut-off 
function $\eta \in  C^\infty_0(\Omega)$ so that  $\eta \, u=u$. 
Then, applying the inequality ${\rm Re} \, \langle -\mathcal{L} u, u\rangle \ge 0$ 
with $\eta \, u =u$, we can clearly replace $A$, $\mathbf{b}$, and $c$ by 
$\eta^2 A$, $\eta^2  \mathbf{b}$, and $\eta^2 c$, respectively, so that we may assume that the coefficients 
of $\mathcal{L}$ are compactly supported.

Notice that  we can also 
replace $u(x)$ with  a shifted test function  $u_y(x)= u(x+y)$, where $|y|< \epsilon$, 
for a small enough $\epsilon$ depending on the support of $u$ and $\eta$. 
Let 
$\phi_\epsilon(y) = \epsilon^{-n} \phi(\frac{t}{\epsilon})$ be a mollifier,  so that $\phi \ge 0$, 
$\phi\in C^\infty(B(0, 1))$
and 
$\int_{B(0, 1)} \phi(y) \, dy=1$.  Then 
$\textrm{supp} \, (\phi_\epsilon) \subset B(0, \epsilon)$. 
Integrating both sides of the inequality  ${\rm Re} \, \langle -\mathcal{L} u_y, u_y\rangle \ge 0$ against $\phi_\epsilon(t) \, dt$, we obtain 
\[
{\rm Re} \, \langle -\mathcal{L_\epsilon} u, u\rangle \ge 0,
\]
where the coefficients 
$A_\epsilon$, $\mathbf{b}_\epsilon$, and $c_\epsilon$ are the mollifications 
of the distributions $\eta^2 A$, $\eta^2  \mathbf{b}$, and $\eta^2 c$, respectively. Conversely, 
if this inequality holds for small enough $\epsilon>0$, then passing to the limit 
as $\epsilon \to 0$, we recover the inequality 
${\rm Re} \, \langle -\mathcal{L} u, u\rangle \ge 0$. In other words, we can assume 
without loss of generality that the coefficients of $\mathcal{L}$ in the inequality 
${\rm Re} \, \langle -\mathcal{L} u, u\rangle \ge 0$ 
are $C^\infty_0(\Omega)$ functions. 

Returning to the one-dimensional case, we fix $u \in C^\infty_0(I)$, and define 
$p_\epsilon$, $\tilde{b}_\epsilon$, $\sigma_\epsilon$ as the mollifications of $p$, $\tilde{b}$, 
and $\sigma$, respectively. Obviously, we can always replace $p$ in the inequality 
${\rm Re} \, \langle -\mathcal{L} u, u\rangle \ge 0$  by $p + \delta$, for some 
$\delta>0$, and 
eventually set $\delta \downarrow 0$.

Let $\eta\in C^\infty_0(I)$ be a real-valued function such that $\eta \, u =u$. We set 
\begin{equation}\label{lamb} 
\lambda(t) = \eta (t) \, \int_{t_0}^t \frac{\tilde{b}_\epsilon (\tau)}{p_\epsilon(\tau) + \delta}, 
\quad t \in I,
\end{equation} 
where $t_0\in I$, and $\delta>0$. Then clearly $\lambda \in  C^\infty_0(I)$, and 
\[
\lambda'(t) = \eta(t) \, \frac{\tilde{b}_\epsilon (t)}{p_\epsilon(t) + \delta} + 
\eta'(t) \, \lambda(t), \quad t \in I. 
\]

Notice that, on the support of $u$, we have  
\[
\tilde{b}_\epsilon (t) - \lambda'(t) \, (p_\epsilon(t) +\delta) (t) =0, \quad 
2 \, \tilde{b}_\epsilon (t) \,  \lambda'(t) - (p_\epsilon(t) +\delta) \, [\lambda'(t)]^2 = \frac{\tilde{b}_\epsilon (t)^2}{p_\epsilon(t) +\delta}. 
\quad 
\]

Using the exponential substitution $u = z e^{- i \lambda}$ discussed in 
Sec. \ref{grad} below, we deduce from \eqref{subs-ab} and \eqref{subs-b}
that  ${\rm Re} \, \langle -\mathcal{L_\epsilon} u, u\rangle \ge 0$ holds if and only if \eqref{2.1} holds with $p_\epsilon+ \delta$, 
$\tilde{b}_\epsilon$ and $\sigma_\epsilon$ in place of $p$,  $\tilde{b}$ and $\sigma$, 
for all small enough $\epsilon$ and $\delta>0$, that is 
\begin{equation}\label{2.1a}
\int_I (p_\epsilon(t)+ \delta) \,   h'(t)^2 \, dt -\langle \sigma_\epsilon,  h^2 
\rangle -  \int_{I}  \frac{\tilde{b}_\epsilon(t)^2}{p_\epsilon(t)+ \delta} \, h(t)^2 \, dt \ge 0,
\end{equation}
where $h \in C^{\infty}_0(I)$ has the same support as $u$, since $h$ is a linear combination of $f$ and $g$, the real and imaginary 
parts of $u$ (see Sec. \ref{proof-1}).

Clearly, $p_{\epsilon_k}  
\to p$  in $L^1_{{\rm loc}} (I)$, and $\sigma_{\epsilon_k}  \to \sigma$ in $D'(I)$ as $\epsilon_k\to 0$. 
Since $\tilde{b} \in L^1_{{\rm loc}} (I)$ and $p \in L^1_{{\rm loc}} (I)$, there exists 
a subsequence $k\to\infty$ so that  $\tilde{b}_{\epsilon_k}  
\to \tilde{b}$, and $p_{\epsilon_k}  
\to p$
a.e. Passing to the limit as $k\to\infty$, and using Fatou's lemma, we deduce 
the inequality 
\begin{equation}\label{2.1s}
\int_I (p(t)+ \delta) \,   h'(t)^2 \, dt -\langle \sigma,  h^2 
\rangle -  \int_{I}  \frac{\tilde{b}(t)^2}{p(t)+ \delta} \, h(t)^2 \, dt \ge 0. 
\end{equation}

Letting $\delta\downarrow 0$ and using the dominated convergence theorem and the 
monotone convergence theorem, we see  
that $\frac{\tilde{b}^2}{p} \in L^1_{{\rm loc}} (I)$, and \eqref{2.1} holds, provided 
$-\mathcal{L}$ is accretive. This proves the necessity of  condition \eqref{2.1} 
for the accretivity of the operator 
$-\mathcal{L}$.

To prove the sufficiency  of condition \eqref{2.1}, assuming $p\in L^1_{{\rm loc}}(I)$,  notice that 
it obviously yields \eqref{2.1s} for every $\delta>0$. Using the same mollification process as above we deduce 
\begin{equation}\label{2.1ss}
\int_I (p_\epsilon (t) + \delta) \,   h'(t)^2 \, dt -\langle \sigma_\epsilon,  h^2 
\rangle -  \int_{I}  \Big(\frac{\tilde{b}^2}{p + \delta}\Big)_\epsilon (t) \, h(t)^2 \, dt \ge 0. 
\end{equation}

By Jensen's inequality, we have 
\[
(\tilde{b}_\epsilon (t))^2 \le (p_\epsilon (t)+\delta) \Big(\frac{\tilde{b}^2}{p +\delta}\Big)_\epsilon (t), \quad t \in I. 
\]
Consequently, \eqref{2.1ss} yields \eqref{2.1a}. 
As was mentioned above,  inequality  \eqref{2.1a}, via the exponential substitution 
$u = z e^{- i \lambda}$ with $\lambda$ defined by \eqref{lamb}, is equivalent 
to the inequality ${\rm Re} \, \langle -\mathcal{L}_\epsilon u, u\rangle\ge 0$. 
Letting $\epsilon \to 0$, we conclude that $-\mathcal{L}$ is accretive. 
This proves statement (i) of Theorem \ref{Theorem 0}. 

To prove statement (ii), suppose that \eqref{0.2} holds for some $f \in L^1_{{\rm loc}}(I)$ such that $\frac{f^2}{p} \in L^2_{{\rm loc}}(I)$. Letting $g = \frac{f}{p}$, we see 
that $p \, g^2 \in L^1_{{\rm loc}}(I)$, and condition \eqref{g-cond} holds with 
$q$ in place of $\sigma$. Hence, by Theorem \ref{thm-real}, the operator 
$-\mathcal{N}$ is nonnegative definite, i.e., \eqref{0.1} holds, which yields that 
$-\mathcal{L}$ is accretive by statement (i) of Theorem \ref{Theorem 0}. 

 In the converse direction, if $-\mathcal{L}$ is accretive, then \eqref{0.1} holds by statement (i) of Theorem \ref{Theorem 0}. In other words, the operator 
  $-\mathcal{N}$ is nonnegative 
 definite. Thus, by Theorem \ref{thm-real}, there exists a function 
 $g \in L^2_{{\rm loc}}(I)$ such that \eqref{0.2} holds with $f= p \, g$. Here $p$ is uniformly bounded above and below by positive constants, so that $f \in L^2_{{\rm loc}}(I)$, and the right-hand side of \eqref{0.2} is well-defined. 
 The proof of  Theorem \ref{Theorem 0} 
is complete.\qed

\section{Decomposition of the drift term}\label{grad}

In this section, we deduce a version of Proposition \ref{prop_1} for 
vector fields 
\begin{equation}\label{E:4a}
 \tilde{\mathbf{b}} = P \nabla \lambda + \mathbf{d},
\end{equation}
where $\lambda$ and $\mathbf{d}$ are  real-valued. In particular, it 
yields more explicit criteria of accretivity in the special cases 
where $\mathbf{d}=0$, i.e., $ \tilde{\mathbf{b}} = P \nabla \lambda$, or 
$P=I$ and ${\rm div} \,  \mathbf{d}=0$, so that \eqref{E:4a}
 is the Hodge 
decomposition.

This is a consequence 
of the following theorem, which 
in a sense represents 
a higher dimensional analogue of Theorem \ref{Theorem 0} 
in the case  $n=1$, with $\lambda' =\frac{{\rm Im} \, b}{2p}$.  
\begin{theorem}\label{thm-grad} Let   $P$, $\tilde{\mathbf{b}}$, and $\sigma$ be defined by \eqref{expr}, where $P\in L^\infty_{{\rm loc}}(\Omega)^{n \times n}$  is
 a nonnegative definite matrix, and  
 $\tilde{\mathbf{b}}\in L^2_{{\rm loc}}(\Omega)^n$. Suppose 
 $\nabla \lambda \in L^{2}_{{\rm loc}}(\Omega)^n$, where 
 $\lambda \in D'(\Omega)$  is real-valued. Then  
$-\mathcal{L}$ is an accretive operator if and only if  the following two conditions  hold: 
 \begin{equation}\label{thm4-a}
 \begin{split}
 &[h]^2_{\mathcal{N}}  =  \int_{\Omega}  (P \nabla h \cdot \nabla h) \,  dx
  -\langle \sigma \, h, h \rangle
  \\& - \int_{\Omega} (2 \tilde{ \mathbf{b}} -P \nabla \lambda)\cdot \nabla \lambda  \, |h|^2 dx
\ge 0,
 \end{split}
\end{equation}
for all  $h \in C^{\infty}_0(\Omega)$, and 
 \begin{equation}\label{thm4-b}
 \left \vert \langle \tilde{\mathbf{b}} - P \nabla \lambda,  u \nabla v-  v \nabla  u \rangle
\right \vert \le [u]_{\mathcal{N}} \, [v]_{\mathcal{N}},
 \end{equation}
 for all real-valued $u, v \in C^{\infty}_0(\Omega)$.
\end{theorem}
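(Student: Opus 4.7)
The plan is to reduce Theorem \ref{thm-grad} to Proposition \ref{prop_1} via the unitary gauge substitution $u = z \, e^{-i\lambda}$, with complex-valued $z$. Since $\lambda$ is real, this substitution preserves $|u|^2 = |z|^2$ and, modulo the regularity issues discussed below, gives a bijection between test functions $u$ and test functions $z$. It is designed precisely so that the effective drift is shifted from $\tilde{\mathbf{b}}$ to $\tilde{\mathbf{b}} - P \nabla\lambda$, at the cost of a compensating zero-order contribution that accumulates into the new quadratic form $[\cdot]^2_{\mathcal{N}}$.

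The core step is an algebraic identity. Writing $u = f+ig$ with $f,g$ real and $z = v_1 + i v_2$ with $v_1, v_2$ real, a short computation using $\bar u\,\nabla u = \bar z\,\nabla z - i|z|^2 \nabla\lambda$ gives
\[
f\nabla g - g\nabla f \;=\; (v_1 \nabla v_2 - v_2 \nabla v_1) - (v_1^2 + v_2^2)\,\nabla\lambda,
\]
while expanding $P\nabla u \cdot \overline{\nabla u}$ and using the symmetry of $P$ gives
\begin{align*}
P\nabla f \cdot \nabla f + P\nabla g \cdot \nabla g
\;=\; & P\nabla v_1 \cdot \nabla v_1 + P\nabla v_2 \cdot \nabla v_2 \\
& -\, 2 \bigl(v_1 P\nabla v_2 - v_2 P\nabla v_1\bigr)\cdot \nabla\lambda + (v_1^2 + v_2^2)\,P\nabla\lambda \cdot \nabla\lambda.
\end{align*}
Substituting both identities into the expression for ${\rm Re}\,\langle -\mathcal{L}u,u\rangle$ derived in the proof of Proposition \ref{prop_1} and collecting terms yields the master identity
\[
{\rm Re}\,\langle -\mathcal{L}u, u\rangle \;=\; [v_1]^2_{\mathcal{N}} + [v_2]^2_{\mathcal{N}} + 2\,\langle \tilde{\mathbf{b}} - P\nabla\lambda,\; v_1\nabla v_2 - v_2 \nabla v_1\rangle,
\]
with $[\cdot]^2_{\mathcal{N}}$ exactly as in \eqref{thm4-a}. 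From here the characterization follows precisely as in Proposition \ref{prop_1}: taking $v_2 = 0$ forces \eqref{thm4-a}, and the rescaling $v_1 \mapsto \alpha v_1$, $v_2 \mapsto \alpha^{-1} v_2$ (which leaves the commutator term invariant) followed by minimization in $\alpha > 0$ reduces the nonnegativity of the right-hand side for all real $v_1,v_2$ to the Cauchy--Schwarz bound \eqref{thm4-b}. The converse is immediate from the arithmetic-geometric mean inequality applied to $[v_1]^2_{\mathcal{N}} + [v_2]^2_{\mathcal{N}}$.

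The main obstacle is making the substitution $u = z\,e^{-i\lambda}$ rigorous when $\lambda$ is only a distribution whose gradient lies in $L^2_{\mathrm{loc}}$: neither $e^{-i\lambda}$ nor $z\,e^{-i\lambda}$ is a priori a smooth, compactly supported test function, so the algebraic manipulations above are not literally justified. I would handle this by adapting the mollification scheme already used in the proof of Theorem \ref{Theorem 0}. Fix $u \in C_0^\infty(\Omega)$ and a cut-off $\eta \in C_0^\infty(\Omega)$ with $\eta u = u$, replace $A, \mathbf{b}, c$ by the mollifications $A_\epsilon, \mathbf{b}_\epsilon, c_\epsilon$ of $\eta^2 A, \eta^2 \mathbf{b}, \eta^2 c$, and replace $\lambda$ by a smooth $\lambda_\epsilon$ obtained by mollifying $\eta\lambda$. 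For the smooth operator $\mathcal{L}_\epsilon$ and smooth phase $\lambda_\epsilon$, the substitution is valid and the master identity holds pointwise. The hypotheses $P \in L^\infty_{\mathrm{loc}}$ and $\tilde{\mathbf{b}},\,\nabla\lambda \in L^2_{\mathrm{loc}}$ ensure that every term in the identity is locally integrable and has a dominating envelope on the compact support of $u$, so that dominated convergence as $\epsilon \downarrow 0$ recovers the identity, and hence the equivalence, in the original distributional setting.
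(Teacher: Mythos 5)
Your argument is correct and is essentially the paper's own: the same gauge substitution $u = z\,e^{-i\lambda}$, leading to the same master identity (you write it in terms of ${\rm Re}\,z,\,{\rm Im}\,z$ rather than the paper's complex form $\bar z\nabla z$, but it is the same computation), the same rescaling trick borrowed from Proposition \ref{prop_1}, and the same mollification of $\lambda$ to handle $\nabla\lambda\in L^2_{\rm loc}$ only. The only inessential deviation is that you also mollify $A,\mathbf b,c$, which the hypotheses $P\in L^\infty_{\rm loc}$, $\tilde{\mathbf b}\in L^2_{\rm loc}$ render unnecessary and which the paper does not do; and the paper passes to the limit via explicit $L^2_{\rm loc}$ convergence of $\nabla\lambda_\epsilon$ rather than your dominated-convergence phrasing, though both yield the same conclusion.
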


\begin{remark} In the special case where $P$ is invertible (for instance, uniformly elliptic), and $P^{-1} \tilde{\mathbf{b}} = \nabla \lambda$ is a gradient field, the sole condition 
\eqref{thm4-a}, namely, 
\begin{equation}\label{thm4-as}
 [h]^2_{\mathcal{N}}  =  \int_{\Omega}  (P \nabla h \cdot \nabla h) \,  dx
  -\langle \sigma \, h, h \rangle-  \int_{\Omega} (P^{-1} \tilde{ \mathbf{b}} \cdot \tilde{ \mathbf{b}})  \, |h|^2 dx
\ge 0,
\end{equation}
for all  $h \in C^{\infty}_0(\Omega)$, characterizes accretive operators 
$-\mathcal{L}$. 

This is an analogue of condition \eqref{0.1} in the one-dimensional case. 
\end{remark}

\begin{remark} If $P=I$, then in decomposition \eqref{E:4a} 
we can pick the irrotational component 
of $\tilde{\mathbf{b}}$ as 
$\nabla \lambda$. 
In this case, Theorem \ref{thm-grad} is clearly equivalent to 
the inequality 
\begin{equation}\label{thm4-c}
 [h]^2_{\mathcal{N}}  =  || \nabla h||^2_{L^2(\Omega)} -\langle \sigma \,  h, h \rangle
   - \int_{\Omega}  \left( \, |\tilde{\mathbf{b}}|^2 - 
  |\mathbf{d}|^2 \, \right) \, 
  |h|^2 dx \ge 0, 
 \end{equation}
 for  all $h\in C^\infty_0(\Omega)$, where 
   $\mathbf{d}= \tilde{\mathbf{b}}-\nabla \lambda$ is  the divergence-free component 
   of $ \tilde{\mathbf{b}}$,  
 combined 
  with the   
   commutator inequality 
   \begin{equation}\label{thm4-d}
 \left \vert \langle \mathbf{d},  u \nabla v-  v \nabla  u \rangle
\right \vert \le [u]_{\mathcal{N}} \, [v]_{\mathcal{N}},
 \end{equation}
 for all real-valued $u, v \in C^{\infty}_0(\Omega)$.
 
Notice that here the condition $\tilde{\mathbf{b}}\in L^2_{{\rm loc}}(\Omega)$ is 
 necessary for $-\mathcal{L}$ to be accretive, provided $\mathbf{d}\in L^2_{{\rm loc}}(\Omega)$, as in the one-dimensional case where we can set $\mathbf{d}=0$. 
 \end{remark} 

\begin{proof} It follows from \eqref{L-2}, \eqref{L-2a} that, without loss of generality,  
 we may assume 
\[
\mathcal{L} u= {\rm div} \, (P \nabla u) + 2 i \, \tilde{\mathbf{b}} \cdot \nabla + \sigma,
\]
where 
$P$, $\tilde{\mathbf{b}}$, and $\sigma$ are given by \eqref{expr}.

Let us assume for simplicity that $P\in L^\infty_{{\rm loc}}(\Omega)^{n \times n}$, where $P$ is a nonnegative 
definite, symmetric,  real-valued $n \times n$ matrix, and 
$\nabla \lambda \in L^2_{{\rm loc}}(\Omega)$, where $ \lambda \in D'(\Omega)$
 is real-valued.

We use 
the substitution  $u=z \, e^{-i \lambda}$, where $u \in C^\infty_0(\Omega)$ is complex-valued,  to replace $\tilde{\mathbf{b}}$ with $\tilde{\mathbf{b}} - \nabla \lambda$. 

Suppose first that  $\lambda \in C^\infty_0(\Omega)$. Since $z =u \, e^{i \lambda} \in C^\infty_0(\Omega)$,
we have 
\[
P \nabla u = (P \nabla z - i z \, P \nabla \lambda) \, e^{- i \lambda}, 
\quad 
\nabla \bar u = (\nabla \bar z + i \bar z \, \nabla \lambda) \, e^{i \lambda}.
\]
Hence, 
\[
P \nabla u \cdot \nabla \bar u= P \nabla z \cdot \nabla \bar z + 
(P \nabla \lambda \cdot \nabla \lambda) \, |z|^2 
-i \, (P \nabla \lambda) \cdot   (z \, \nabla \bar z- \bar z \nabla z ).
\]
We deduce 
\begin{equation*}
\begin{split}
\int_{\Omega} (P\nabla u \cdot \nabla \bar u) \,  dx & = \int_{\Omega}  
(P\nabla z \cdot \nabla \bar z)  \, dx +  
\int_{\Omega} (P\nabla \lambda \cdot \nabla \lambda)  \, |z|^2 \, dx \\ & + 2  \, \int_{\Omega} P \nabla \lambda \cdot  {\rm Im} \, (z \nabla \bar z)  \, dx.
\end{split}
\end{equation*}

Since 
  $ \tilde{\mathbf{b}}  \in L^1_{{\rm loc}}(\Omega)$, we have 
\begin{equation*}
\begin{split}
 {\rm Re} \, \langle -\mathcal{L} u, u\rangle  & = \int_{\Omega}  \, 
 (P\nabla z \cdot \nabla \bar z) \, dx 
- \int_{\Omega} \Big [ \,  
2 (\tilde{\mathbf{b}} \cdot \nabla \lambda) -(P\nabla \lambda \cdot \nabla \lambda) \Big] \, |z|^2 \, dx \\ 
&  - \langle  \sigma, \, |z|^2   \rangle - 2 \langle  \tilde{\mathbf{b}}-  P\nabla \lambda,  {\rm Im} (z \, \nabla \bar z  )\rangle.
\end{split}
\end{equation*}
It follows that 
\[
{\rm Re} \, \langle -\mathcal{L} u, u\rangle \ge 0 \Longleftrightarrow  {\rm Re} \, \langle -\mathcal{M} z, z\rangle \ge 0, 
\]
where
\begin{equation*}
\mathcal{M} z = {\rm div} \, (P \nabla z)  + 2 i \, ( \tilde{\mathbf{b}} -  P \nabla \lambda) \cdot \nabla z + \Big(\sigma + 
2 (\tilde{\mathbf{b}} \cdot \nabla \lambda)-   (P\nabla \lambda \cdot \nabla \lambda)  \Big).
\end{equation*}

Thus, ${\rm Re} \, \langle -\mathcal{L} u, u\rangle \ge 0$ 
if and only if 
\begin{equation}\label{subs-ab}
\begin{split}
[h]_{\mathcal{M}}^2 = & \int_{\Omega} (P \nabla h \cdot \nabla h) \,  dx -\langle \sigma \,  h, h \rangle
  \\& -  \int_{\Omega} \Big [ \,  2 (\tilde{\mathbf{b}} \cdot \nabla \lambda) -(P\nabla \lambda \cdot \nabla \lambda)     \Big ] \, |h|^2 \, dx \ge 0,
 \end{split}
\end{equation}
for all  real-valued $h \in C^{\infty}_0(\Omega)$, and 
\begin{equation}\label{subs-b}
 \left \vert \langle \tilde{\mathbf{b}} - P \nabla \lambda,  u \nabla v-  v \nabla  u \rangle
\right \vert \le [u]_{\mathcal{M}} \, [v]_{\mathcal{M}},
 \end{equation}
 for all real-valued $u, v \in C^{\infty}_0(\Omega)$.

In the case $\nabla \lambda  \in L^2_{{\rm loc}}(\Omega)$, we notice that, without loss of generality 
we may assume that $\lambda$ is compactly supported in $\Omega$. Otherwise,  
we consider $\lambda \eta$, where $\eta \in C^{\infty}_0(\Omega)$ is a cut-off 
function such that $\eta \, u=u$, and apply the subsequent estimates to $\lambda \eta$.  We next replace 
$\lambda$ with its mollification $\lambda_\epsilon = \lambda \star \phi_\epsilon$, 
for $\epsilon>0$, 
where as usual $\phi_\epsilon(x)=\epsilon^{-n} \phi(\epsilon^{-1} x)$, for some 
$\phi \in C^\infty_0(\Omega)$. 

Using the same substitution as above, 
for $z \in C^{\infty}_0(\Omega)$, 
we set  
\[
u_{\epsilon}=z \, e^{-i \lambda_\epsilon} \in C^{\infty}_0(\Omega), 
\quad \nabla u_{\epsilon}= ( \nabla z- i \nabla \lambda_{\epsilon}) \, 
e^{- i \lambda_\epsilon}. 
\]
Notice that, as above,
\begin{equation*}
\begin{split}
  {\rm Re} \, \langle \mathcal{L} u_{\epsilon}, u_{\epsilon}\rangle  & = 
  \int_{\Omega}  \, (P \nabla z \cdot \nabla z) \, dx 
  -  \int_{\Omega} 
  \Big [ \,  2 (\tilde{\mathbf{b}} \cdot \nabla \lambda_{\epsilon})  - (P\nabla \lambda_{\epsilon} \cdot \nabla \lambda_{\epsilon}) 
 \Big ] \, |z|^2  \, dx 
 \\ &        - \langle \sigma, |z|^2\rangle  -  
2 \langle  \tilde{\mathbf{b}} - P \nabla \lambda_{\epsilon}, {\rm Im} (z \, \nabla \bar z  )\rangle \ge 0,
\end{split}
\end{equation*}
which yields the following two conditions:
\begin{equation*}
\begin{split}
[z]^2_{\mathcal{M}} & =  \int_{\Omega}  \, (P \nabla z \cdot \nabla z) \, dx 
  -  \int_{\Omega} 
  \Big [ \,  2 (\tilde{\mathbf{b}} \cdot \nabla \lambda_{\epsilon})  - (P\nabla \lambda_{\epsilon} \cdot \nabla \lambda_{\epsilon}) 
 \Big ] \, |z|^2  \, dx     \\ &  - \langle \sigma, |z|^2\rangle  \ge 0, \quad \left \vert \langle  \tilde{\mathbf{b}} - P \nabla \lambda_{\epsilon}, u \nabla v -v \nabla u 
\right \vert   \le [u]_{\mathcal{M}} [v]_{\mathcal{M}}, 
\end{split}
\end{equation*}
for all  $z\in C^\infty_0(\Omega)$ (real- or complex-valued) and real-valued $u, v \in C^\infty_0(\Omega)$. 

We have $||(\nabla \lambda_{\epsilon} - \nabla \lambda) \eta ||_{L^2(\Omega)} \to 0$ 
as  $\epsilon \to 0$. Consequently, it follows $||( \tilde{\mathbf{b}} \cdot \nabla \lambda_{\epsilon}-  \tilde{\mathbf{b}} \cdot \nabla \lambda  )\eta ||_{L^2(\Omega)} \to 0$ for $\tilde{\mathbf{b}}   \in L^2_{{\rm loc}}(\Omega)$. 
Since $P\in L^\infty_{{\rm loc}}(\Omega)^{n \times n}$, we have  
$||(P \nabla \lambda_{\epsilon} - P \nabla \lambda) \eta ||_{L^2(\Omega)} \to 0$ as well. 
Passing to the limit as $\epsilon \to 0$ completes the proof of Theorem 
\ref{thm-grad}.
\end{proof}

\section{{\rm BMO} estimates, trace inequalities,  and admissible measures}\label{Section 2}

In this section, we discuss  {\rm BMO} estimates, trace inequalities and admissible measures  
used in Theorem \ref{main}, which gives necessary and sufficient conditions 
on  $A$, 
$\mathbf{b} $  
and $c$ for the accretivity of   $-\mathcal{L}$ on $\R^n$,  under 
some additional assumptions on the upper and lower bounds of the quadratic 
forms $[\cdot]_{\mathcal{H}}$ imposed in Sec. \ref{sec2.5}.

By $L^{1, \, 2}(\Omega)$ we denote the energy space (homogeneous Sobolev space)
 defined  as the completion of the complex-valued   
$C^\infty_0(\Omega)$ functions in the Dirichlet norm 
$||\nabla \cdot ||_{L^2(\Omega)}$.

For $f \in L^1_{\rm loc} (\R^n)$, we set 
$
m_B (f) = \frac 1 {|B|} \int_B f(x) \, dx,
$
where $B$ is a ball in $\R^n$, and denote by  
 ${\rm BMO}(\R^n)$ the class of functions 
$f \in L^r_{\rm loc} (\R^n)$ for which 
$$
\sup_{x_0\in \R^n, \, \delta>0} \, \,  \frac {1}{|B_\delta(x_0)|} 
\int_{B_\delta (x_0)} |f(x)-m_{B_\delta(x_0)}(f)|^r \, dx < + \infty,
$$
for any (or, equivalently, all) $1\le r < +\infty$.

The corresponding vector- and matrix-valued function spaces are introduced in 
a similar way. In particular,  ${\rm BMO}(\R^n)^n$ stands for 
 the class of vector fields 
$\mathbf{f}= \{f_j\}_{j=1}^n : \, \R^n \to \C^n$, such that 
$f_j \in {\rm BMO}(\R^n)$, $j =1, 2, \dots, n$. The 
 matrix-valued analogue is denoted by ${\rm BMO}(\R^n)^{n \times n}$, etc.  
 The notion of the weak-$*$ ${\rm BMO}$-convergence is discussed based on the 
 $H^1-{\rm BMO}$ duality in 
\cite[Ch. IV]{St}.

The matrix divergence 
operator  
${\rm Div}: \, D'(\Omega)^{n \times n}\to D'(\Omega)^{n}$ is defined 
on matrix fields $F = (f_{i j})_{i, j =1}^n \in D'(\Omega)^{n \times n}$ by ${\rm Div} \, F = \left ( \sum_{j=1}^n \, \partial_j \, 
f_{i j}  \right)_{i=1}^n \in D'(\Omega)^n$. If $F$ is 
skew-symmetric, i.e., $f_{ij} = -f_{ji}$, 
then we obviously have ${\rm div} \, ({\rm Div} \, F) = 0$.

The Jacobian, $\D: \, D'(\Omega)^{n} \to D'(\Omega)^{n \times n}$, is 
the formal adjoint of $-{\rm Div}$, 
$$\langle  {\rm Div} \, F,  \, \mathbf{v}  \rangle = -  \, 
\langle F, \, \D \, \mathbf{v}  
\rangle, \qquad   \, \mathbf{v}   \in C^\infty_0(\Omega)^n.$$
Here the scalar product of matrix fields $F= (f_{i j})_{i, j =1}^n$ and 
$G= (g_{i j})_{i, j =1}^n$ is defined by 
$\langle F, \, G\rangle = \sum_{i, j =1}^n \langle f_{ij}, \, g_{ij}\rangle.$ If $F, \, G \in L^2(\Omega)^{n\times n}$, 
then 
$$\langle F, \, G\rangle = \int_{\Omega} {\rm trace} \, (F^t \cdot \bar G) \, dx,$$
where $F^t = (f_{ji})_{i, j =1}^n$ is the transposed matrix, and $\bar G = (\bar g_{i j})_{i, j =1}^n$.

The matrix curl operator ${\rm Curl}: \, D'(\Omega)^n \to D'(\Omega)^{n \times n}$ 
is defined on vector fields $\mathbf{f}=(f_k)_{k=1}^n$ 
 by ${\rm Curl} \, \mathbf{f} = (\partial_j f_k-\partial_j f_k)_{j, k=1}^n$. Clearly, 
 ${\rm Curl} \, \mathbf{f}$ is always a skew-symmetric matrix field. 

Notice that in the case $n=3$ we can use the usual vector-valued ${\rm curl}$ operator which maps $D'(\Omega)^3\to D'(\Omega)^3$. For instance, if a vector field  is 
represented as $\mathbf{b}= {\rm curl} (\mathbf{g})$, then we can write 
  commutator inequalities of the type  
$
\left \vert \langle \mathbf{b}, u \nabla \, v -v \, \nabla u\rangle \right \vert \le C \, 
|| \nabla u||_{L^2(\R^3)} \, || \nabla v ||_{L^2(\R^3)}, 
$
in the equivalent form 
\[
\left \vert \langle \mathbf{g}, \nabla u \times \nabla v\rangle \right \vert \le C \, 
|| \nabla u||_{L^2(\R^3)} \, || \nabla v ||_{L^2(\R^3)}, 
\]
 where $\mathbf{g}  \in {\rm BMO}(\R^3)^3$. This is  an analogue of the Jacobian determinant inequality \eqref{jacob} in two dimensions. 
Such inequalities are studied in compensated compactness theory \cite{CLMS}. 

Similarly, when $n=3$, in Theorem \ref{thm-i2}   
we can use the Hodge decomposition in $\R^3$, 
 \[
 \tilde{\mathbf{b}} =\nabla f + {\rm curl} (\mathbf{g}), 
 \]
 where 
 $\mathbf{g} =\Delta^{-1} ( {\rm curl} \, \tilde{\mathbf{b}}) \in {\rm BMO}(\R^3)^3$. Here  
 the operator $\Delta^{-1}$ is understood in the sense of the weak-$*$ ${\rm BMO}$-convergence, as explained in \cite{MV3}. Notice that this decomposition does not 
 contain  any harmonic vector fields $\mathbf{h}$ such that both ${\rm div} \, (\mathbf{h})=0$  and ${\rm curl} (\mathbf{h})=0$. This is, of course, true in $\R^n$ for higher 
 dimensions $n\ge 4$ as well.

The capacity of a compact set $e\subset \R^n$ is defined by 
(\cite{M}, Sec. 2.2): 
\begin{equation}\label{E:cap}
{\rm cap} \, (e) = \inf \, 
\left\{ \, ||u||^2_{L^{1,2} (\R^n)}  : \quad 
u \in C^\infty_0(\R^n), \quad u(x) \ge 1 \, \,  {\rm on} 
\, \,  e \right\}.
\end{equation}
For a cube or ball $Q$ in $\R^n$, 
\begin{equation}\label{E:cube}
{\rm cap} \, (Q) \simeq |Q|^{1-\frac 2 n} \quad {\rm if} \, \,  n \ge 3; 
\quad {\rm cap} \, (Q) =0 \quad  {\rm if} \, \, n=2.
\end{equation}

The capacity ${\rm Cap} \, (\cdot)$ associated with the inhomogeneous 
Sobolev space $W^{1, \, 2}(\R^n)$ defined  by
\begin{equation}\label{E:Cap}
{\rm Cap} \, (e) = \inf \, 
\left\{ \, ||u||^2_{W^{1,2} (\R^n)}  : \quad 
u \in C^\infty_0(\R^n), \quad u(x) \ge 1 \, \,  {\rm on} 
\, \,  e \right\},
\end{equation}
for compact sets $e\subset \R^n$. 
Note that ${\rm Cap} \, (e) \simeq {\rm cap} \, (e)$ if ${\rm diam} \, (e) \le 1$, and $n \ge 3$.  
For a cube or ball $Q$ in $\R^n$,  
\begin{equation}\label{E:Cube}
{\rm Cap} \, (Q) \simeq |Q|^{1-\frac 2 n} \quad {\rm if} \, \,  n \ge 3; 
\quad {\rm Cap} \, (Q) \simeq \left (\log \tfrac 2 {|Q|}\right)^{-1} \quad  {\rm if} \, \, n=2,
\end{equation}
provided $|Q| \le 1$. For these and other properties of capacities, as well as related notions of 
potential theory we refer to \cite{AH},  \cite{M}. 

By $\mathcal{M}^{+}(\Omega)$ we denote the calls of all nonnegative Radon measures (locally finite) in an open set $\Omega \subseteq \R^n$. We discuss in this section several equivalent characterizations of the class of \textit{admissible 
measures} $\mu \in \mathfrak{M}_+^{1, \,2}(\Omega)$  which obey the so-called trace inequality \eqref{E:tr-om} (see \cite{AH}, \cite{M}, and the extensive literature cited there). 

We start with the case $\Omega=\R^n$. 
A measure $\mu \in \mathcal{M}^{+}(\R^n)$ is said to be admissible, i.e., 
$\mu \in \mathfrak{M}_+^{1, \,2}(\R^n)$, if it  
obeys the trace inequality: 
\begin{equation}\label{E:tr0}
 \left(\int_{\R^n} |u|^2 \,  d \mu\right)^{\frac{1}{2}} \le C \, || \nabla u||_{L^2(\R^n)}, \qquad u \in C^\infty_0(\R^n),
\end{equation}
where $C$ is a positive constant which does not depend on $u$.

For $\mu\in \mathcal{M}^{+}(\R^n)$, we denote
by $I_1 \mu=(-\Delta)^{-\frac 1 2} \mu$  the Riesz potential of order $1$, 
\[
I_1 \mu(x)=(-\Delta)^{-\frac 1 2} \mu(x) = c(n) \, \int_{\R^n} \, \frac{d \mu(y)}
{|x-y|^{n-1}}, \quad x \in \R^n.
\]
 Here $c(n)$ is a normalization constant which depends only on $n$.    

We have the following equivalent characterizations of  admissible measures in $\R^n$ (see \cite[Ch. 11]{M}).

\begin{theorem}\label{Trace Theorem} Let $\mu\in \mathcal{M}^{+}(\R^n)$. Then  $\mu \in  \mathfrak{M}_+^{1, \,2}(\R^n)$ if and only if any one of the following 
statements holds.

{\rm (i)}  The Riesz potential $I_1 \mu \in L^2_{{\rm loc}} (\R^n)$, and 
$(I_1 \mu)^2 \in \mathfrak{M}_+^{1, \,2}(\R^n)$, i.e., 
 \begin{equation}\label{E:tr1}
 \left(\int_{\R^n} |u|^2 \, (I_1 \mu)^2 \,  d x \right)^{\frac{1}{2}}\le c_1 \, 
 || \nabla u||_{L^2(\R^n)}, 
 \quad u \in C^\infty_0(\R^n),
\end{equation}
where $c_1>0$ does not depend on $u$.

{\rm (ii)} For every compact set $e \subset \R^n$, 
 \begin{equation}\label{E:tr2}
 \mu (e) \le c_2 \, \text{\rm{cap}} \, (e),
\end{equation}
where $c_2$ does not depend on $e$.

{\rm (iii)} For every  ball $B$ in $\R^n$,
 \begin{equation}\label{E:tr3}
 \int_B (I_1 \mu_B)^2 \, dx \le c_3 \, \mu (B),
\end{equation}
where $d \mu_B = \chi_B \, d \mu$, 
and $c_3$ does not depend on $B$.

{\rm (iv)} The pointwise inequality
 \begin{equation}\label{E:tr4}
I_1 [(I_1 \mu)^2 (x)] \le c_4 \, I_1 \mu(x) < \infty 
\end{equation}
holds a.e., where $c_4$ does not depend on $x \in \R^n.$ 

{\rm (v)} For every dyadic cube $P$ in  $\R^n$, 
\begin{equation}\label{E:tr5}
\sum_{Q \subseteq P}   \frac {\mu(Q)^2}{
|Q|^{1 - \frac 2 n}}  \le c_5 \, \mu (P),
\end{equation}
where the sum is taken over all dyadic cubes $Q$ contained in $P$, and 
$c_5$ does not depend on $P$.

Moreover, the least constants $c_i$, $i=1, \ldots, 5$,  are equivalent to the least constant $c$ in {\rm (\ref{E:tr0})}. 
\end{theorem}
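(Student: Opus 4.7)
The plan is to establish the web of equivalences around the defining trace inequality \eqref{E:tr0} by combining Maz'ya's capacitary strong-type inequality, Sawyer-type testing on balls, and tools from nonlinear potential theory.

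First I would prove trace $\Leftrightarrow$ (ii), the capacitary criterion of Maz'ya. Given (ii), for $u \in C^\infty_0(\R^n)$ I split along dyadic level sets $E_k = \{|u| > 2^k\}$ and estimate
\[
\int_{\R^n} |u|^2 \, d\mu \lesssim \sum_k 2^{2k} \mu(E_k) \le c_2 \sum_k 2^{2k}\, {\rm cap}(E_k) \lesssim c_2 \, ||\nabla u||^2_{L^2(\R^n)},
\]
the last step being Maz'ya's classical capacitary strong-type estimate. The converse follows by testing \eqref{E:tr0} on a smooth approximant of the equilibrium potential of a compact set $e$, using the definition \eqref{E:cap}.

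Next I would prove trace $\Leftrightarrow$ (iii) $\Leftrightarrow$ (v). The trace inequality is equivalent by duality to the boundedness $I_1 : L^2(d\mu) \to L^2(\R^n)$, and testing this dual estimate on $f = \chi_B$ is exactly (iii). The converse is a Sawyer-type testing principle, in which localized energies on balls upgrade to full boundedness via a Whitney decomposition and the almost-orthogonality of $I_1 \mu_B$ away from $B$. The step (iii) $\Leftrightarrow$ (v) is a dyadic discretization: Fubini yields
\[
\int_P (I_1 \mu_P)^2 \, dx \simeq \iint_{P \times P} \frac{d\mu(y)\, d\mu(z)}{|y-z|^{n-2}},
\]
and decomposing $|y-z|^{2-n}$ dyadically gives $\sum_{Q \subseteq P} \mu(Q)^2 |Q|^{2/n-1}$, the denominator matching ${\rm cap}(Q)$ via \eqref{E:cube}.

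Finally, the pointwise condition (iv) and the self-improving trace (i) would be tied in through nonlinear potential theory. Wolff's inequality $||I_1 \mu_E||^2_{L^2(\R^n)} \simeq \int_E W^\mu_{1,2} \, d\mu$, where $W^\mu_{1,2}$ is the $(1,2)$-Wolff potential of $\mu$, converts the local energy condition (iii) into a pointwise bound for $W^\mu_{1,2}$, which a direct computation identifies with the iterated Riesz potential inequality (iv). For (iv) $\Rightarrow$ (i), Tonelli gives $\int u^2 (I_1 \mu)^2 \, dx = \int I_1(u^2 I_1 \mu) \, d\mu$, and Cauchy-Schwarz in $d\mu$ with (iv) on one factor and the trace inequality for $\mu$ on the other produces the trace inequality for $(I_1 \mu)^2 \, dx$; conversely, applying the first step to $(I_1 \mu)^2 \, dx$ converts (i) into a capacity bound which unwinds into (iv). The main obstacle is precisely this pointwise condition: it relies on Wolff's inequality and the Hedberg-Wolff theorem, deep results of nonlinear potential theory. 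Once all of the above are in place, propagating the constants through the chain yields the claimed equivalence of $c_1, \ldots, c_5$ with the sharp constant $c$ in \eqref{E:tr0}, with no additional extremal argument needed.
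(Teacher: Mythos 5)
The paper does not actually prove this theorem: it is stated as a quoted fact with the attribution ``see \cite[Ch.~11]{M},'' i.e.\ it is imported wholesale from Maz'ya's monograph (and the surrounding literature of Maz'ya, Kerman--Sawyer, Maz'ya--Verbitsky). So there is no in-paper argument to compare your proposal against; I can only assess your sketch against the standard proofs in those sources, which it broadly tracks: Maz'ya's capacitary strong-type inequality for (ii), duality and Sawyer-type testing for (iii), dyadic discretization for (v), Wolff potentials for (iv), and the self-improvement statement (i).

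Two concrete gaps in the sketch. First, the claimed equivalence
$\int_P (I_1\mu_P)^2\,dx \simeq \iint_{P\times P}|y-z|^{2-n}\,d\mu\,d\mu$
is only one-sided: the right-hand side is the energy $\int_{\R^n}(I_1\mu_P)^2\,dx$, so restricting the integration to $P$ gives $\lesssim$, not $\simeq$. Showing that the localized quantity in (iii) nonetheless controls the global one (and hence controls (v) and the operator norm of $I_1$) requires an additional argument, typically by enlarging the ball to $2B$, using the growth bound $\mu(B)\lesssim r^{n-2}$ forced by (iii), and handling the off-diagonal tails; simply writing $\simeq$ hides the hardest part of the Kerman--Sawyer implication. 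Second, your derivation of (iv)$\Rightarrow$(i) invokes ``the trace inequality for $\mu$'' on one factor of the Cauchy--Schwarz estimate, but at that point in the argument the only hypothesis is (iv); you have established (iii)$\Rightarrow$(iv), not (iv)$\Rightarrow$ trace. As written the step is circular. The standard fix is to first prove (iv)$\Rightarrow$ trace directly, e.g.\ via Schur's test for $I_1:L^2(dx)\to L^2(d\mu)$ with the test weight $w=I_1\mu$ (this is precisely where the pointwise inequality \eqref{E:tr4} is used), and only then run the Fubini/Cauchy--Schwarz bootstrap to upgrade to (i). Neither issue invalidates the overall architecture, but both need to be addressed before the chain of implications actually closes.
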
 

 \begin{remark}  It follows from Poincar\'{e}'s inequality 
 and the formula for the capacity of a ball, ${\rm cap} \, (B(x, r))=c_n \, r^{n-2}$,  
 for $n \ge 3$,  
 that if $d \mu =|\nabla v|^2 dx \in \mathfrak{M}_+^{1, \,2}(\R^n)$, 
 where $v \in L^{1,2}_{{\rm loc}}(\R^n)$, then $v \in {\rm BMO} (\R^n)$. 
 \end{remark}

 \begin{remark} An analogous characterization holds for admissible measures on the Sobolev space $W^{1, \, 2}(\R^n)$ 
in place of $L^{1, \, 2}(\R^n)$. One only needs to replace Riesz potentials $I_1\mu=(-\Delta)^{-\frac 1 2} \mu$ in 
statements {\rm (i)},  {\rm (iii)},  and {\rm (iv)}  
by Bessel potentials 
 $J_1\mu=(1-\Delta)^{-\frac 1 2} \mu$, the capacity ${\rm cap} \, (\cdot)$ in   {\rm (ii)}
   by ${\rm Cap} \, (\cdot)$, and restrict oneself to cubes $P$ such that $|P| \le 1$ in 
{\rm (v)}. 
\end{remark}  

Originally, the trace inequality in the Sobolev space $L^{1,2}_0(\Omega)$,  
for an arbitrary open set $\Omega \subseteq \R^n$, was characterized by 
the first author 
in \cite{M1}, \cite{M2} in capacity terms as follows. A  measure 
$\mu \in \mathcal{M}^{+}(\Omega)$ is said to be admissible if the inequality
\begin{equation}\label{E:tr-om}
 \left (\int_{\Omega} |u|^2 \,  d \mu\right)^{\frac{1}{2}} \le C \, || \nabla u||_{L^2(\Omega)} 
\end{equation}
holds for all $u \in C^\infty_0(\Omega)$, where $C$ is a positive constant which does not depend on $u$. The class of 
admissible measures for \eqref{E:tr-om} is denoted by $\mathfrak{M}^{1,2}(\Omega)$. 

The capacity ${\rm cap} \, (e, \Omega)$ of a compact subset $e \subset \Omega$ 
is defined by (see \cite{M}, Sec. 2.2): 
\begin{equation}\label{E:cap-om}
{\rm cap} \, (e, \Omega) = \inf \, 
\left\{ \, ||\nabla u||^2_{L^2 (\Omega)}  : \quad 
u \in C^\infty_0(\Omega), \quad u(x) \ge 1 \, \,  {\rm on} 
\, \,  e \right\}.
\end{equation}

Then $\mu \in \mathfrak{M}^{1,2}(\Omega)$ if and only if (\cite{M1}, \cite{M2}; see also \cite[Sec. 2.3]{M}) 
\begin{equation}\label{cap-crit}
\mu(e) \le c \, {\rm cap} \, (e, \Omega), 
\end{equation}
where the constant $c$ does not depend on $e$. 

Moreover, condition \eqref{cap-crit} with $c=\frac{1}{4}$  is sufficient for 
\eqref{E:tr-om} to hold with $C=1$. Conversely, 
 \eqref{cap-crit} with $c=1$ is necessary in order that \eqref{E:tr-om} hold with $C=1$. 
Both constants $c=\frac{1}{4}$ and $c=1$ and in these statements are sharp 
(see \cite[Sec. 2.5.2]{M}).

There is a dual characterization of the trace inequality which does not use capacities. 
Let us assume that $G$ is a nontrivial nonnegative Green's function associated with the Dirichlet Laplacian 
in $\Omega$. Then $\mu \in \mathfrak{M}^{1,2}(\Omega)$ if and only if the inequality 
\begin{equation}\label{energy-crit}
\int_{e \times e} G(x, y) \, d \mu(x) \, d \mu(y) \le c \, \mu(e)
\end{equation}
holds for all measurable sets $e\subset \Omega$. 

Moreover, inequality \eqref{energy-crit} is equivalent to the weighted norm inequality 
\[
|| G (f d \mu)||_{L^2(\Omega, \mu)} \le C \, || f||_{L^2(\Omega, \mu)}, \quad \textrm{for all} \, \, f \in L^2(\Omega, \mu). 
\] 
It is also equivalent to the weak-type $(1, 1)$ inequality 
\[
|| G (f d \mu)||_{L^{1, \infty}(\Omega, \mu)} \le C \, || f||_{L^1(\Omega, \mu)}, \quad \textrm{for all} \, \, f \in L^1(\Omega, \mu). 
\] 
Here $G (f d \mu)(x) =\int_{\Omega} G(x, y) \, f(y) \, d \mu(y)$ is Green's potential 
of $f \, d \mu$.  

In \cite[Theorem 6.5]{QV}, 
similar  results are proved for nonnegative kernels $G$ satisfying a weak form of the 
\textit{maximum principle}: 
\[
\sup \{ G \nu(x): \, \, x \in \Omega\} \le \mathfrak{b} \, \sup \{ G \nu(x): \, \, x \in \textrm{supp} \, \nu\},
\]
where $ \mathfrak{b}\ge 1$ is a constant which does not depend on 
$\nu \in \mathcal{M}^{+}(\Omega)$. 

In particular, if  $G$ is a \textit{quasi-metric} kernel, i.e., $d(x, y)=\frac{1}{G(x, y)}$ 
is symmetric and satisfies a quasi-triangle inequality, then it suffices to verify  \eqref{energy-crit}  on quasi-metric balls 
 $B(x, r)=\{y \in \Omega: \, \, d(x, y)\le r\}$  in place of arbitrary sets $e$ (see \cite{FNV}, \cite{QV}). 
 
Analogous results hold (\cite{FNV}) for a more general class of quasi-metrically modifiable kernels $G$. This is important since the Green kernel $G$ is known to be quasi-metrically modifiable if $\Omega$ satisfies the boundary Harnack principle,  for instance,  if $\Omega$ is a bounded NTA domain (\cite{K}).


\begin{thebibliography}{SSSZ}






\bibitem
{AH}    \textsc{D. R. Adams and L. I.  Hedberg,} 
\emph{Function Spaces and Potential 
Theory}, Grundlehren der Math. Wissenschaften \textbf{314},  
Springer-Verlag, Berlin--Heibelberg--New York, 1996.

\bibitem
{An} \textsc{A. Ancona,} \emph{On strong barriers and an inequality of Hardy for domains in $\mathbf{R}^n$}, J. London Math. Soc.  \textbf{34} (1986) 274--290.

\bibitem
{CiM} \textsc{A. Cialdea and V.~G. Maz'ya,} \emph{Criterion for the 
$L^p$-dissipativity of second order
differential operators with complex coefficients}, J. Math. Pures Appl. 
 \textbf{84} (2005) 1067--1100. 

\bibitem
{CLMS}  \textsc{R. Coifman, P. L. Lions,  Y. Meyer, and  S. Semmes,}  
\emph{Compensated compactness and Hardy spaces},  
J. Math. Pures Appl. \textbf{72} (1993) 247--286.




\bibitem
{EE}  \textsc{D. E. Edmunds and W. D. Evans,} 
\emph{Spectral Theory and Differential Operators},  
Clarendon Press, Oxford, 1987.



\bibitem
{FNV}\textsc{M. Frazier, F. Nazarov, and I. Verbitsky,} 
\emph{Global estimates for kernels of Neumann series 
and Green's functions}, J. London Math. Soc. \textbf{90} (2014) 903--918.
%

\bibitem
{GGM} \textsc{F. Giannetti, L. Greco, and G. Moscariello,} 
\emph{Linear elliptic equations with lower order terms}, 
    Diff. Int. Eqs.  \textbf{26} (2013) 623--638.

\bibitem
{GP} \textsc{C. Guevara and Nguyen Cong Phuc,} 
\emph{Leray's self-similar solutions to the Navier-Stokes 
  equations with profiles in Marcinkiewicz and 
Morrey spaces}, SIAM J. Math. Analysis \textbf{50} (2018)  
541--556. 
\bibitem
{H} \textsc{T. Hara,} \emph{A refined subsolution estimate of weak subsolutions to second-order linear elliptic equations with a singular vector field}, Tokyo J.  Math. 
\textbf{38} (2015) 75--98.

\bibitem
{Ha} \textsc{P. Hartman,} \emph{Ordinary Differential Equations}, Second Ed., 
Classics in Appl. Math. \textbf{38},   
SIAM, Philadelphia, PA, 2002.





\bibitem
{Hi}\textsc{E. Hille,} \emph{Non-oscillation theorems}, Trans. Amer. Math. Soc. \textbf{64} (1948) 234--252.



\bibitem
{JMV1} \textsc{B. J. Jaye, V.~G. Maz'ya and I.~E. Verbitsky,} 
\emph{Existence and regularity of positive solutions of elliptic equations of 
Schr\"{o}dinger type}, J. d'Analyse Math. \textbf{118} (2012) 577--621. 
263--302.

\bibitem
{JMV2} \textsc{B. J. Jaye, V.~G. Maz'ya and I.~E. Verbitsky,} 
\emph{Quasilinear elliptic equations and weighted Sobolev-Poincar\'{e} inequalities with distributional weights}, Adv. Math. \textbf{232} (2013) 513--542. 
263--302.


\bibitem
{K}\textsc{C. E. Kenig,} \emph{Harmonic Analysis Techniques for second order Elliptic Boundary Value Problems}, CBMS Regional Conference Ser.  Math. 
\textbf{83},   
Amer. Math. Soc.,
Providence, RI,  1994.


\bibitem
{KP}\textsc{C. E. Kenig and J. Pipher,} \emph{The Dirichlet problem for elliptic equations with drift terms}, Publ.  Math. \textbf{45} (2001)  199--217. 


\bibitem
{KT}\textsc{H. Koch and D. Tataru,}  
\emph{Well-posedness for the Navier-Stokes equations},  
Adv. Math. \textbf{157} (2001) 22--35.


\bibitem
{LPS}   \textsc{V.~A. Liskevich, M.~A. Perelmuter, and Yu.~A.
Semenov},
\emph{Form-bounded perturbations
of generators of sub-Markovian semigroups},
Acta Appl. Math.  \textbf{44} (1996) 353--377.

\bibitem
{M1} \textsc{V.~G. Maz'ya}, \emph{The negative spectrum of the
higher-dimensional
Schr\"{o}dinger operator}, Sov. Math. Dokl. \textbf{3} (1962) 808--810.

\bibitem
{M2} \textsc{V.~G. Maz'ya}, \emph{On the theory of the higher-dimensional
Schr\"{o}dinger operator} 
(Russian), Izv. Akad. Nauk SSSR Ser. Mat. \textbf{28}  (1964) 1145--1172.



\bibitem
{M} \textsc{V.~Maz'ya}, \emph{Sobolev Spaces, with Applications to Elliptic Partial Differential 
Equations},  2nd augmented ed., Grundlehren der math. Wissenschaften 
\textbf{342},  Berlin--New York, Springer, 2011.



\bibitem
{MV1} \textsc{V.~G. Maz'ya and I.~E. Verbitsky,} 
\emph{The Schr\"{o}dinger operator on the energy space: boundedness
and compactness criteria}, Acta Math. \textbf{188} (2002) 
263--302.

\bibitem
{MV2}\textsc{V.~G. Maz'ya and I.~E. Verbitsky,}  
\emph{Boundedness and compactness
criteria for the one-dimensional Schr\"{o}dinger operator},  
 In: Function Spaces, Interpolation Theory and Related Topics, 
Proc. Jaak Peetre Conf.,
Lund, Sweden, August 17--22, 2000. Eds. M. Cwikel, A. Kufner,
G. Sparr, De Gruyter, Berlin, 2002, 369--382.

\bibitem
{MV3} \textsc{V.~G. Maz'ya and I.~E. Verbitsky},  
\emph{Form boundedness of the general second order differential
operator}, Commun. Pure Appl. Math. \textbf{59} (2006) 1286--1329.



\bibitem
  {MV4} \textsc{V.~G. Maz'ya and I.~E. Verbitsky}, 
\emph{The form boundedness
criterion  for the relativistic Schr\"{o}dinger operator},  Ann. Inst. Fourier \textbf{54} (2004) 317--339. 


\bibitem
  {MV5} \textsc{V.~G. Maz'ya and I.~E. Verbitsky},  
\emph{Infinitesimal  form boundedness 
and Trudinger's subordination for the Schr\"{o}dinger operator}, Invent. Math. 
\textbf{162} (2005) 81--136. 

\bibitem
{NU}\textsc{A. I. Nazarov and N. N. Ural'tseva},  
\emph{The Harnack inequality and related properties of solutions of elliptic and parabolic equations with divergence-free lower-order coefficientts}, St. Petersburg Math. J. \textbf{23} (2012) 93--115.  
 

\bibitem
{Ph}\textsc{Tuoc Phan}, 
\emph{Regularity gradient estimates for weak solutions of singular quasi-linear parabolic equations.}, J. Diff. Eq. \textbf{263} (2017) 8329--8361.

\bibitem
{QV} \textsc{S. Quinn and I.~E. Verbitsky},  
\emph{A sublinear version of Schur's lemma and elliptic PDE},  Analysis \& PDE
   \textbf{11} (2018) 439--466.

\bibitem
{RS} \textsc{M. Reed and B. Simon}, \emph{Methods of Modern 
Mathematical 
Physics II: Fourier Analysis, Self-Adjointness}, 
Academic Press, New York--London, 1975.


\bibitem
{RSS} \textsc{G.~V. Rozenblum,  M.~A. Shubin, and M.~Z.  Solomyak},
 \emph{Spectral Theory of Differential Operators}, Encyclopaedia
of Math. Sci. \textbf{64}, Partial Differential Equations VII,
Ed. M.A. Shubin, Springer-Verlag, Berlin--Heidelberg, 1994.


\bibitem
{SSSZ} \textsc{G. Seregin, L. Silvestre, V. Sverak, and A. Zlatos}, 
\emph{On divergence-free drifts}, J. Diff. Eq. \textbf{252} (2011) 505--540.

\bibitem
{St} \textsc{E. M. Stein,}  \emph{Harmonic Analysis: Real-Variable Methods, Orthogonality, and Oscillatory Integrals.} Princeton Math. Ser. \textbf{43},  Monographs in Harmonic Analysis,  Princeton University Press, Princeton, NJ, 1993.

\bibitem
{T} \textsc{R. Temam,}  \emph{Navier-Stokes Equations, Theory and Numerical Analysis}, 3rd ed.,  Studies in Math. Appl. {\textbf 2},  North-Holland, Amsterdam, 1984.

\bibitem
{ZP}\textsc{V.V. Zhikov and S.E. Pastukhova,} 
\emph{On operator estimates in homogenization theory},  Russian Math. Surveys \textbf{71} (2016) 417--511. 

    
        

\end{thebibliography}
\end{document}